\theoremstyle{plain}
\newtheorem{thm}{Theorem}[section]
\newtheorem{lem}[thm]{Lemma}
\newtheorem{prop}[thm]{Proposition}
\newtheorem{cor}[thm]{Corollary}
\newtheorem{thm-defn}[thm]{Theorem-Definition}
\theoremstyle{definition}
\newtheorem{defn}{Definition}[section]
\theoremstyle{remark}
\newtheorem{rem}{Remark}[section]
\newcommand{\gl}{\mathrm{GL}}
\newcommand{\SL}{\mathrm{SL}}
\newcommand{\Sp}{\mathrm{Sp}}
\newcommand{\Gsp}{\mathrm{GSp}}
\newcommand{\SO}{\mathrm{SO}}
\newcommand{\GO}{\mathrm{GO}}
\newcommand{\ks}{\mathfrak{S}}
\newcommand{\pgl}{\mathrm{PGL}}
\newcommand{\bbg}{\mathbb{G}}
\newcommand{\cf}{\mathcal{F}}
\newcommand{\co}{\mathcal{O}}
\newcommand{\cl}{\mathcal{L}}
\newcommand{\cp}{\mathcal{P}}
\newcommand{\cvv}{\mathcal{V}}
\newcommand{\cee}{\mathcal{E}}
\newcommand{\xx}{\mathscr{X}}
\newcommand{\ka}{\mathfrak{a}}
\newcommand{\kg}{\mathfrak{g}}
\newcommand{\kn}{\mathfrak{n}}
\newcommand{\kp}{\mathfrak{p}}
\newcommand{\kt}{\mathfrak{t}}
\newcommand{\bn}{\mathbf{N}}
\newcommand{\bz}{\mathbf{Z}}
\newcommand{\br}{\mathbf{R}}
\newcommand{\bp}{\mathbf{P}}
\newcommand{\bq}{\mathbf{Q}}
\newcommand{\val}{\mathrm{val}}
\newcommand{\lie}{\mathrm{Lie}}
\newcommand{\Ad}{\mathrm{Ad}}
\newcommand{\codim}{\mathrm{Codim}}
\newcommand{\sch}{\mathrm{Sch}}
\newcommand{\Hom}{\mathrm{Hom}}
\newcommand{\ec}{\mathrm{Ec}}
\newcommand{\ind}{\mathrm{ind}}
\newcommand{\gr}{\mathrm{Gr}}
\newcommand{\End}{\mathrm{End}}
\newcommand{\aut}{\mathrm{Aut}}
\newcommand{\rk}{\mathrm{rk}}
\newcommand{\pic}{\mathrm{Pic}}
\newcommand{\lgghat}{\widehat{G(\!(\ep)\!)}}
\newcommand{\rl}{\mathrm{L}}
\newcommand{\ep}{\epsilon}
\newcommand{\xxs}{\mathscr{X}^{\xi}}
\author{\rm Zongbin Chen}
\address{\'Ecole polytechnique fédérale de Lausanne\\
SB, Mathgeom/Geom, MA B1 447, station 8\\
Lausanne, CH-1015 \\
Switzerland.}
\email{zongbin.chen@gmail.com}
\title{The $\xi$-stability on the affine grassmannian}
\begin{document}

\maketitle

%\tableofcontents

\begin{abstract}

We introduce a notion of $\xi$-stability on the affine grassmannian $\xx$ for the classical groups, this is the local version of the $\xi$-stability on the moduli space of Higgs bundles on a curve introduced by Chaudouard and Laumon. We prove that the quotient $\xx^{\xi}/T$ of the stable part $\xx^{\xi}$ by the maximal torus $T$ exists as an ind-$k$-scheme, and we introduce a reduction process analogous to the Harder-Narasimhan reduction for vector bundles over an algebraic curve. 
For the group $\mathrm{SL}_{d}$, we calculate the Poincaré series of the quotient $\xx^{\xi}/T$.

\end{abstract}

\section{Introduction}

Let $k$ be an algebraically closed field, $F=k(\!(\ep)\!)$ the field of Laurent series with coefficients in $k$, $\co=k[\![\ep]\!]$ the ring of integers of $F$, $\kp=\ep k[\![\ep]\!]$ the maximal ideal of $\co$. Let $\val:F^{\times}\to \bz$ be the discrete valuation normalized by $\val(\ep)=1$. 

Let $G$ be a split reductive group over $k$. We have the loop group $\rl G$ (resp. positive loop group $\rl^{+}G$), which is the ind-$k$-scheme representing the functor
$$
\rl G(R)=G(R (\!(t)\!)) \;(\text{resp. }G(R [\![t]\!])), \quad \forall \, k\text{-algebra }R.
$$

The affine grassmannian $\xx^{G}$, by definition, is the quotient $\rl G/ \rl^{+} G$, it has the structure of an ind-$k$-scheme. We will simplify $\xx^{G}$ to $\xx$ when the context is clear. All through the paper, we work with the reduced scheme structure on the affine grassmannian, it is hence enough to work at the level of $k$-rational points $\xx(k)=G(F)/K$, where we denote by $K$ for the maximal compact open subgroup $G(\co)$ of $G(F)$.

Let $T$ be a maximal torus of $G$. We introduce a notion of $\xi$-stability on the affine grassmannian $\xx$, which is a local version of the $\xi$-stability on the moduli space of Higgs bundles introduced by Chaudouard and Laumon \cite{cl}. When $G$ is adjoint, the construction goes roughly as follows: Let $\cp(T)$ be the set of Borel subgroups of $G$ containing $T$. For $x\in \xx,\, B\in \cp(T)$, let $f_{B}(x)\in X_{*}(T)$ be the unique cocharacter $\nu$ such that $x\in U_{B}(F)\ep^{\nu}K/K$, where $U_{B}$ is the unipotent radical of $B$. We identify $X_{*}(T)\otimes \br$ with $\kt$ and we denote $\ec(x)$ for the convex hull of $(f_{B}(x))_{B\in \cp(T)}$ in $\kt$. Given a generic element $\xi\in \kt$, the point $x$ is said to be $\xi$-stable if $\xi\in \ec(x)$.

This notion, a priori combinatorial, is in fact algebro-geometric. Recall that $\xx$ is an ind-$k$-scheme, i.e. there exist projective algebraic varieties $\xx_{n}$ over $k$, together with closed immersions $\xx_{n}\hookrightarrow \xx_{n+1}$, such that  
$$
\xx=\lim_{n\to +\infty}\xx_{n}.
$$
For a particular choice of the injective system $(\xx_{n})_{n\in \bn}$, we show that the $\xi$-stability on $\xx_{n}$ is the same as the stability (in the sense of GIT) under the action of a torus $S_{n}\subset T$ with respect to the Pl\"ucker embedding. ($\xx_{n}$ is a closed subvariety of some Grassmannian, so there is a natural Pl\"ucker embedding.) As a consequence, the quotient $\xx^{\xi}/T$ of the stable part $\xx^{\xi}$ by the torus $T$ exists as an ind-$k$-scheme.

\begin{thm}

Let $G$ be a classical group over $k$, let $T$ be a maximal torus of $G$. The geometric quotient $\xx^{\xi}/T$ exists as an ind-$k$-scheme, and it satisfies the valuative criterion of properness. 

\end{thm}

This construction can also be rephrased by using the action of $T$ but with a twisted polarisation on $\xx_{n}$. It seems to us that this construction can not be put together, i.e. there is no line bundle on $\xx$ such that the $\xi$-stability is the same as the stability under the action of $T$ with respect to this line bundle. In fact, the Picard group of $\xx$ is essentially generated by the determinant line bundle and a simple calculation shows that the stability in this setting coincides with the $0$-stability condition.

Nevertheless, we can introduce a reduction process which is analogous to the Harder-Narasimhan reduction for vector bundles on an algebraic curve. We call it \emph{Arthur-Kottwitz reduction}. It allows us to reduce the non-$\xi$-stable parts onto the $\xi^{M}$-stable parts on the affine grassmannian $\xx^{M}$ associated to the Levi subgroups $M$ of $G$ containing $T$.

In general, the quotient $\xx^{\xi}/T$ is more difficult to study than $\xx$ itself. For the group $\SL_{d}$, we are able to calculate the Poincaré series of $\xx^{\xi}/T$. 

\begin{thm}

For $G=\mathrm{SL}_{d}$, the Poincaré series of $\xx^{\xi}/T$ is
$$
\frac{1}{(1-t^{2})^{d-1}}\prod_{i=1}^{d-1}(1-t^{2i})^{-1}.
$$ 
\end{thm}

There are two ingredients in the proof: the first one is the fact that $\xx^{\xi}/T$ is homologically smooth and hence satisfies the Poincaré duality, the second one is to give a lower bound on the codimension of the non-$\xi$-stable parts.

Our motivation to introduce the $\xi$-stability on the affine grassmannian comes from our study on the affine Springer fibers. By definition, the affine Springer fiber associated to an integral regular semi-simple element $\gamma\in \kg(F)$ is 
$$
\xx_{\gamma}=\{g\in G(F)/K\mid \Ad(g^{-1})\gamma\in \kg(\co)\}.
$$
It is the local analogue of the fibers of the Hitchin fibration.
We refer the reader to \cite{go} for a summary of the basic properties of the affine Springer fibers. It is conjectured by Goresky, Kottwitz and MacPherson \cite{gkm} that the affine Springer fibers are cohomologically pure in the sense of Deligne. One of the difficulties to attack this conjecture comes from the fact that generally the affine Springer fibers are not of finite type. For example, let $\gamma\in \kt(\co)$, then $T(F)$ acts on $\xx_{\gamma}$ and this induces an isomorphism between $T(F)/T(\co)$ and the irreducible components of $\xx_{\gamma}$. This difficulty can be overcome if we consider $\xx^{\xi}_{\gamma}/T$. It is reasonable to expect that the quotient $\xx_{\gamma}^{\xi}/T$ is still cohomologically pure and that the purity of $\xx_{\gamma}$ is equivalent to that of $\xx_{\gamma}^{\xi}/T$. This gives a possibility to attack the purity conjecture. In fact,  
Chaudouard has done some calculations for the group $G=\mathrm{SL}_{2}$ and his calculation is the starting point of the story.

\subsection*{Notations}

Let $\Phi=\Phi(G,T)$ be the root system of $G$ with respect to $T$, let $W$ be the Weyl group of $G$ with respect to $T$. For any subgroup $H$ of $G$ which is stable under the conjugation of $T$, we note $\Phi(H,T)$ for the roots appearing in $\lie(H)$. We use the $(G,M)$ notation of Arthur \cite{a}. Let $\cf(T)$ be the set of parabolic subgroups of $G$ containing $T$, let $\cl(T)$ be the set of Levi subgroups of $G$ containing $T$. For every $M\in \cl(T)$, we denote by $\cp(M)$ the set of parabolic subgroups of $G$ whose Levi factor is $M$. Let $X^*(M)=\Hom(M, \bbg_m)$ and $\ka_M^{*}=X^*(M)\otimes\br$. The restriction $X^{*}(M)\to X^{*}(T)$ induces an injection $\ka_{M}^{*}\hookrightarrow \ka_{T}^{*}$. Let $(\ka_{T}^{M})^{*}$ be the subspace of $\ka_{T}^{*}$ generated by $\Phi(M,T)$. We have the decomposition in direct sums
$$
\ka_{T}^{*}=(\ka_{T}^{M})^{*}\oplus \ka_{M}^{*}.
$$

The canonical pairing 
$$
X_{*}(T)\times X^{*}(T)\to \bz
$$ 
can be extended linearly to $\ka_{T}\times \ka_{T}^{*}\to \br$, with $\ka_{T}=X_{*}(T)\otimes \br$. For $M\in \cl(T)$, let $\ka_{T}^{M}\subset \ka_{T}$ be the subspace orthogonal to $\ka_{M}^{*}$, and $\ka_{M} \subset \ka_{T}$ be the subspace orthogonal to $(\ka_{T}^{M})^{*}$, then we have the decomposition
$$
\ka_{T}=\ka_{M}\oplus \ka_{T}^{M},
$$
let $\pi_{M},\,\pi^{M}$ be the projections to the two factors.

\subsection*{Acknowledgement} 

This article is based on part of the author's thesis at Université Paris-Sud 11 at Orsay. We want to thank G\'erard Laumon for having posed this question, and for his encouragements during the preparation of this work. We also want to thank an anonymous referee for his careful reeding and helpful suggestions.

\section{Recall on affine grassmannians}

\subsection{Affine grassmannians via lattices}

The general references for this section are \cite{bl}, \cite{go}, \cite{faltings} and \cite{pr}. 

For any $k$-algebra $R$, a \emph{lattice} $L$ of $R(\!(\ep)\!)^{d}$ is a $R[\![\ep]\!]$-submodule of $R(\!(\ep)\!)^{d}$ satisfying
\begin{enumerate}
\item
there exists $N\in \bn$ such that
$$
\ep^{N}R[\![\ep]\!]^{d}\subset L\subset \ep^{-N}R[\![\ep]\!]^{d},
$$

\item

the quotient $L/\ep^{N}R[\![\ep]\!]^{d}$ is a projective $R$-module.

\end{enumerate}

We denote by $L_{0}$ the standard lattice $R[\![\ep]\!]^{d}$. The \emph{index} of a lattice $L$ is defined to be
$$
\ind(L)=\rk_{R}(L_{0}/\ep^{N}R[\![\ep]\!]^{d})-\rk_{R}(L/\ep^{N}R[\![\ep]\!]^{d}),  
$$
for any sufficiently large $N$.

One can prove that the affine grassmannian $\xx^{\gl_{d}}$ (resp. $\xx^{\SL_{d}}$) represents the functor which associates to every $k$-algebra $R$ the set of lattices in $R(\!(\ep)\!)^{d}$ (resp. lattices of index $0$ in $R(\!(\ep)\!)^{d}$). Further more, $\xx^{\SL_{d}}$ is naturally identified with the central connected component of $\xx^{\gl_{d}}$, and every connected component of $\xx^{\gl_{d}}$ is isomorphic to $\xx^{\SL_{d}}$. It is then enough to understand the ind-$k$-scheme structure on $\xx^{\SL_{d}}$.

For each $n\in \bn$, let 
$$
\xx_{n}^{\SL_{d}}=\{L\in \xx^{\SL_{d}}\mid L\subset \ep^{-n}L_{0} \}.
$$
We can embed $\xx_{n}^{\SL_{d}}$ as a closed sub-variety of some grassmannian.

\begin{lem}\label{embedA}

The algebraic variety $\xx_{n}^{\SL_{d}}$ is a Springer fiber.

\end{lem}

\begin{proof}

For $L\in \xx_{n}$, we have automatically $\ep^{(d-1)n}L_{0}\subset L$. Let $\gr_{nd(d-1), nd^{2}}$ be the grassmannian of sub vector spaces of dimension $nd(d-1)$ in $k^{nd^{2}}$. We embed $\xx_{n}$ in $\gr_{nd(d-1), nd^{2}}$ by the injective morphism $\varrho_{n}:\xx_{n}\to \gr_{nd(d-1), nd^{2}}$ defined by
$$
\varrho_{n}(L)=L/\ep^{n(d-1)}L_{0}\subset \ep^{-n}L_{0}/\ep^{n(d-1)}L_{0}.
$$
The image of $\varrho_{n}$ is the Springer fiber 
$$
Y_{n}=\{V\in \gr_{nd(d-1), nd^{2}}\mid NV\subset V\},
$$
where $N\in \End(\ep^{-n}L_{0}/\ep^{n(d-1)}L_{0})$ is the endomorphism defined by the multiplication by $\ep$.

\end{proof}

It is clear that there is a natural closed embedding $\iota_{n}: \xx^{\SL_{d}}_{n}\to \xx^{\SL_{d}}_{n+1}$ for all $n$, and  
$$
\xx^{\SL_{d}}=\lim_{n\to +\infty} \xx^{\SL_{d}}_{n},
$$
from which we get the ind-$k$-scheme structure on $\xx^{\SL_{d}}$. The above construction also gives us a natural line bundle $\cl$ on the affine grassmannian: For $x\in \gr_{nd(d-1), nd^{2}}$, let $E_{x}\subset k^{nd^{2}}$ be the sub vector space corresponding to $x$, we have the tautological vector bundle $\cee_{n}$ on $\gr_{nd(d-1), nd^{2}}$ defined by
$$
\cee_{n}=\{(x,v)\in \gr_{nd(d-1), nd^{2}}\times k^{nd^{2}}\mid v \in E_{x}\}.
$$
The line bundle $\bigwedge^{nd(d-1)} \cee_{n}$ is the familiar one that was used to define the Pl\"ucker embedding of $\gr_{nd(d-1), nd^{2}}$. Let $\cl_{n}= \varrho_{n}^{*} \bigwedge^{nd(d-1)} \cee_{n}$, we get a line bundle on $\xx^{\SL_{d}}_{n}$. It is easy to verify that $\iota_{n}^{*}\cl_{n+1}=\cl_{n}$, so 
$$
\cl=\lim_{n\to +\infty}\cl_{n}
$$
defines a line bundle on $\xx^{\SL_{d}}$. A similar construction works on $\xx^{\gl_{d}}$. We call them the \emph{determinant line bundle} on the affine grassmannian. It is known that $\pic(\xx^{\SL_{d}})$ is freely generated by the determinant line bundle.

With the determinant line bundle, we can put the Pl\"ucker embedding for all the $\xx^{\SL_{d}}_{n}$ into one infinite dimensional Pl\"ucker embedding, they are just the inductive limit of Pl\"ucker embedding at each level. Let $V=H^{0}(\xx^{\SL_{d}}, \cl)$, let 
$
\rho: \xx^{\SL_{d}} \to \bp(V)
$
be the resulting infinite dimensional Pl\"ucker embedding. It has another representation theoretic construction, which will be explained in the next section.

For general connected reductive group $G$ over $k$, as explained in \cite{go}, we can choose a closed embedding $G\to \gl_{N}$ for some large enough $N$, and use the following lemma of Beilinson and Drinfeld, since $\gl_{N}/G$ is affine.

\begin{lem}[\cite{bd}, proof of theorem 4.5.1.]\label{BDcriteria}

Let $G_{1}\subset G_{2}$ be linear algebraic groups over $k$ such that the quotient $U:=G_{2}/G_{1}$ is quasi-affine. Suppose that the quotient $\rl G_{2}/\rl^{+} G_{2}$ is an ind-$k$-scheme of ind-finite type. Then the same holds for $\rl G_{1}/\rl^{+} G_{1}$, and the natural morphism $\rl G_{1}/\rl^{+} G_{1}\to \rl G_{2}/\rl^{+} G_{2}$ is a locally closed immersion. If $U$ is affine, then this immersion is a closed immersion.

\end{lem}

The connected components of the affine grassmannian can be described as follows. We identify $X_{*}(T)$ with $T(F)/T(\co)$ by sending $\chi$ to $\ep^{\chi}:=\chi(\ep)$. With this identification, the canonical surjection $T(F)\to T(F)/T(\co)$ can be viewed as
\begin{equation}\label{indexT}
T(F)\to X_{*}(T).
\end{equation}

Let $\Lambda_{G}$ be the quotient of $X_{*}(T)$ by the coroot lattice of $G$ (the subgroup of $X_{*}(T)$ generated by the coroots of $T$ in $G$). We have a canonical homomorphism
\begin{equation}\label{indexM}
G(F)\to \Lambda_{G},
\end{equation}
which is characterized by the following properties: it is trivial on the image of $G_{\mathrm{sc}}(F)$ in $G(F)$ ($G_{\mathrm{sc}}$ is the simply connected cover of the derived group of $G$), and its restriction to $T(F)$ coincides with the composition of (\ref{indexT}) with the projection of $X_{*}(T)$ to $\Lambda_{G}$. Since the morphism (\ref{indexM}) is trivial on $G(\co)$, it descends to the \emph{Kottwitz map}
$$
\kappa_{G}:\xx^{G}\to \Lambda_{G},
$$
whose fibers are the connected components of $\xx^{G}$. (For the group $\gl_{d}$, the Kottwitz map sends a lattice $L$ in $F^{d}$ to its index.) Furthermore, all the connected components are isomorphic to the central one $\xx^{G_{\mathrm{sc}}}$. To see this, for any element $\lambda\in \Lambda_{G}$, let $\tilde{\lambda}\in X_{*}(T)$ be any lifting of $\lambda$, then the translation by $\ep^{\tilde{\lambda}}$ induces an isomorphism between the central connected component and $\kappa_{G}^{-1}(\lambda)$. It is also clear that this isomorphism commutes with the $T$-action.
So we can restrict to the simply connected simple algebraic groups without loss of generality.

\subsection{Affine grassmannian via representation theory}

The basic references for this section are \cite{ps}, \cite{ku}. (Both of them work in the characteristic $0$ setting, but since the Kac-Moody group schemes over $\bz$ have been constructed by Tits \cite{t1}, \cite{t2}, what we recount below should extend to any field.)

Let $G$ be a simply connected simple algebraic group over $k$ of rank $r$. We have the loop group $G(\!(\ep)\!)$,
let $\lgghat$ be the universal central extension of $G(\!(\ep)\!)\rtimes \bbg_{m}$, where $\bbg_{m}$ is the rotation torus, it acts trivially on $G$ and acts by $t*\ep^{n}=t^{n}\ep^{n}$ on the loops.

Let $\widehat{G[\![\ep]\!]}$ be the pre-image of $G[\![\ep]\!]$ under the natural projection $\lgghat\to G(\!(\ep)\!)$. It is known that $\widehat{G[\![\ep]\!]}=G[\![\ep]\!]\times \bbg_{m}$. Let $\delta$ be the character of $G[\![\ep]\!]\times \bbg_{m}$ which is trivial on $G[\![\ep]\!]$ while tautological on $\bbg_{m}$. Let $\cl=\lgghat\times_{(G[\![\ep]\!]\times \bbg_{m})} k$, where $G[\![\ep]\!]\times \bbg_{m}$ acts on $k$ via the character $\delta$. It defines a line bundle on $\lgghat/(G[\![\ep]\!]\times \bbg_{m})=\xx^{G}$. We call it the \emph{determinant line bundle} since it generalises the determinant line bundle on $\xx^{\SL_{d}}$. It is known that $\pic(\xx^{G})$ is freely generated by $\cl$.

The global section $V=H^{0}(\xx^{G}, \cl)$ is a realisation of the integrable highest weight irreducible representation of $\lgghat$ of highest weight $\delta$. Let $v_{0}$ be the highest weight vector, we have the infinite dimensional Pl\"ucker embedding $\rho: \xx^{G}=\lgghat/\widehat{G[\![\ep]\!]}\to \bp(V)$ sending $[g]$ to the line generated by $gv_{0}$. For the group $\SL_{d}$, it coincides with the infinite dimensional Pl\"ucker embedding constructed in the previous section.

The torus $T$ acts on $\xx^{G}$ by left translation, with fixed points $X_{*}(T)$. This action lifts to $\cl$ since $T\subset \lgghat$. In particular, it acts on the fibers $\cl_{\chi},\,\chi\in X_{*}(T)$ with a certain character $\Theta_{\chi}\in X^{*}(T)$. This has been calculated by Mirkovic and Vilonen \cite{mv}. They show that the map $X_{*}(T)\to X^{*}(T)$ sending $\chi $ to $\Theta_{\chi} $ is linear, and its restriction to $\alpha^{\vee},\,\alpha\in \Phi(G, T),$ is 
$$
\Theta_{\alpha^{\vee}}=\frac{2}{(\alpha,\alpha)^{*}}\alpha,
$$ 
where $(\,,)^{*}$ is the invariant non-degenerate symmetric bilinear form on $\kt^{*}$ normalised by $(\theta,\theta)^{*}=2$ for the longest root $\theta$.

\begin{rem}
Observe that $T$ acts on the fiber $\cl_{0}$ of $\cl$ at the point $\ep^{0}$ trivially. This is not in contradiction with the classical construction of the determinant line bundle for $\SL_{d}$. In fact, in the infinite dimensional situation, the ``absolute'' determinant of $T$ action doesn't make sense, it is the ``relative'' determinant which is meaningful, here we use its action on $\cl_{0}$ as the reference.

\end{rem}

\section{Existence of $\xx^{\xi}/T$ as an ind-$k$-scheme}

\subsection{The notion of $\xi$-stability}

For $M\in \cl(T)$, the natural inclusion $M\hookrightarrow G$ induces a closed immersion of $\xx^{M}$ in $\xx^{G}$ by lemma \ref{BDcriteria}. For $P=MN\in \cf(T)$, we have the retraction
$$
f_{P}:\xx\to \xx^{M}
$$ 
which sends $gK=nmK$ to $mM(\co)$, where $g=nmk,\,n\in N(F),\, m\in M(F),\, k\in K$ is the Iwasawa decomposition. More generally we can define $f^{L}_{P_{L}}:\xx^{L}\to \xx^{M}$ for $L\in \cl(T), \, L\supset M$ and $P_{L}\in \cp^{L}(M)$. These retractions satisfy the transition property: Suppose that $Q\in \cp(L)$ satisfy $Q \supset P$, then
$$
f_{P}=f^{L}_{P\cap L}\circ f_{Q}.
$$

We want to point out that the retraction $f_{P}$ is not an algebraic morphism between ind-$k$-schemes. In fact, it is not even a continuous morphism. But it becomes an algebraic morphism when restricted to the inverse image of each connected component of $\xx^{M}$.

\begin{prop}\label{fibrationp}

For $\lambda\in \Lambda_{M}$, let $\xx^{M,\lambda}=\kappa_{M}^{-1}(\lambda)$. The inverse image $f_{P}^{-1}(\xx^{M,\lambda})=N(F)\xx^{M,\lambda}$ is locally closed in $\xx$, and the retraction $f_{P}$ makes it an infinite dimensional homogeneous affine fibration on $\xx^{M,\lambda}$. 
\end{prop}

\begin{proof}

Up to translation, we can suppose that $\lambda=0$. Let $P_{\mathrm{sc}}=M_{\mathrm{sc}}N$, we have $f_{P}^{-1}(0)=P_{\mathrm{sc}}(F)/P_{\mathrm{sc}}(\co)$. Since $G/P_{\mathrm{sc}}$ is quasi-affine, the inclusion $P_{\mathrm{sc}}(F)/P_{\mathrm{sc}}(\co)\to G(F)/G(\co)$ is a locally closed immersion by lemma \ref{BDcriteria}.

Given any point $mK\in \xx^{M,0}, \,m\in M_{\mathrm{sc}}(F)$, the orbit $N(F)mK/K$ is isomorphic to 
$$
N(F)/[N(F)\cap \Ad(m)K]\cong \kn(F)/[\kn(F)\cap \Ad(m)\kg(\co)].
$$
For any other point $m'mK\in \xx^{M,0}$ with $m'\in M_{\mathrm{sc}}(F)$, the translation by $m'$ induces a canonical isomorphism between the orbits, i.e.
$$
m'N(F)mK/K=(m'N(F)m'^{-1})m'mK/M=N(F)m'mK/K.
$$
This justifies the second assertion.

\end{proof}

We have the function $H_{P}:\xx\to \ka_{M}^{G}:=\ka_{M}/\ka_{G}$ which is the composition of $\kappa_{M}\circ f_{P}$ and the natural projection of $\Lambda_{M}$ to $\ka_{M}^{G}$.

\begin{prop}[Arthur]

Let $B',B''\in \cp(T)$ be two adjacent Borel subgroups, let $\alpha_{B',B''}^{\vee}$ be the coroot which is positive with respect to $B'$ and negative with respect to $B''$. Then for any $x\in \xx$, we have
$$
H_{B'}(x)-H_{B''}(x)=n(x,B',B'')\cdot \alpha_{B',B''}^{\vee},
$$
with $n(x, B', B'')\in \bz_{\geq 0}$.

\end{prop}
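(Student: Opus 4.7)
The plan is to reduce the statement, via the transition property of the retractions, to a computation in a group of semisimple rank~$1$, which can then be verified directly using the lattice description of the affine grassmannian.

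Since $B'$ and $B''$ are adjacent, they are contained in a unique common parabolic $P = MN \in \cf(T)$ whose Levi $M$ has semisimple rank~$1$; the intersections $B'_M := B' \cap M$ and $B''_M := B'' \cap M$ are the two opposite Borel subgroups of $M$, and the positive coroot of $M$ relative to $B'_M$ is $\alpha^\vee_{B',B''}$. Setting $y = f_P(x) \in \xx^M$, the transition formulas $f_{B'} = f^M_{B'_M}\circ f_P$ and $f_{B''} = f^M_{B''_M}\circ f_P$ yield
$$
H_{B'}(x) - H_{B''}(x) \equiv \ind^T(f^M_{B'_M}(y)) - \ind^T(f^M_{B''_M}(y)) \pmod{\ka_G}.
$$
Under the decomposition $\ka_T = \ka^M_T \oplus \ka_M$, both terms on the right have the same $\ka_M$-component, namely the image of $\ind^M(y)$ (since $\ind^M$ does not depend on the Borel of $M$ that is used). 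Hence their difference lies in the $1$-dimensional subspace $\ka^M_T$, on which the projection to $\ka^G_T$ is injective, and it suffices to prove the statement for the rank~$1$ group $M$.

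For this rank~$1$ case, passing to the derived group reduces the question to $\SL_{2}$; we work instead, for convenience of notation, with $G = \gl_2$, $B'$ upper triangular and $B''$ lower triangular. Representing $y \in \xx$ by a full $\co$-lattice $L \subset F^2$ and carrying out the Iwasawa decomposition, we find
$$
\ind^T(f_{B'}(L)) = \bigl(\deg(L \cap Fe_1),\; \ind(L) - \deg(L\cap Fe_1)\bigr),
$$
$$
\ind^T(f_{B''}(L)) = \bigl(\ind(L) - \deg(L\cap Fe_2),\; \deg(L\cap Fe_2)\bigr),
$$
where $\deg$ denotes the integer $n$ for which the corresponding rank~$1$ sublattice equals $\ep^n \co e_i$. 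Since $(L \cap Fe_1) \oplus (L \cap Fe_2) \subseteq L$ as full lattices of $F^2$, comparison of indices gives $\deg(L \cap Fe_1) + \deg(L \cap Fe_2) \geq \ind(L)$, so
$$
n(x, B', B'') := \deg(L \cap Fe_1) + \deg(L \cap Fe_2) - \ind(L) \in \bn;
$$
identifying $\alpha^\vee_{B', B''} = (1,-1) \in X_*(T)$ then yields $H_{B'}(x) - H_{B''}(x) = n(x, B', B'') \cdot \alpha^\vee_{B', B''}$.

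The main technical care lies in the reduction step: one must verify that the $\ka_M$-components of the two retracted points agree, so that the difference actually lives in the one-dimensional subspace $\ka^M_T$ where the positivity statement is meaningful. Once this bookkeeping is done, the rank~$1$ computation above is elementary, and the proposition could alternatively be extracted from the abstract theory of retractions of the affine Bruhat-Tits building onto an apartment.
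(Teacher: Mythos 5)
Your argument is correct and follows essentially the same route as the paper: both proofs reduce, via the factorization of $H_{B'}$ and $H_{B''}$ through $f_{P}$ for the parabolic $P$ generated by $B'$ and $B''$ (whose Levi has semisimple rank one), to an explicit rank-one computation. The only difference is cosmetic: you obtain the positivity from the lattice inclusion $(L\cap Fe_{1})\oplus(L\cap Fe_{2})\subseteq L$ in the $\gl_{2}$ model, while the paper reads it off from valuations of the entries of an explicit Iwasawa decomposition in $\SL_{2}$, and your bookkeeping on the $\ka_{M}$-components just makes explicit what the paper's commutative diagram asserts.
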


\begin{proof}

Let $P$ be the parabolic subgroup generated by $B'$ and $B''$, let $P=MN$ be the Levi factorization. The application $H_{B'}$ factor through $f_{P}$, i.e. we have commutative diagram
$$
\xymatrix{
\xx \ar[d]_{f_{P}} \ar[dr]^{H_{B'}} &\\
\xx^{M}\ar[r]_{H^{M}_{B'\cap M}}&\ka_{T}^{G}}
$$
and similarly for $H_{B''}$. Since $M$ has semisimple rank $1$, the proposition is thus reduced to $G=\mathrm{SL}_{2}$. In this case, let $T$ be the maximal torus of the diagonal matrices, $B'=\begin{pmatrix}*&*\\ & * 
\end{pmatrix}
$,
$
B''=\begin{pmatrix}*&\\ * & * 
\end{pmatrix}$, and we identify $\ka_{T}^{G}$ with the line $H=\{(x,-x)\mid x\in \br\}\subset \br^{2}$ in the usual way. By the Iwasawa decomposition, any point $x\in \xx$ can be written as
$
x=\begin{pmatrix}a&b \\ &d\end{pmatrix}K.
$ 
Let $m=\min\{\val(a),\,\val(b)\}$, $n=\val(d)$, then $m+n\leq \val(a)+\val(d)=0$ and
$$
H_{B'}(x)=(-n,n), \quad H_{B''}(x)=(m,-m).
$$
So 
$$
H_{B'}(x)-H_{B''}(x)=(-(n+m), n+m)=-(n+m)\cdot \alpha_{B',B''}^{\vee},
$$
and the proposition follows.
\end{proof}

\begin{defn}

For any point $x\in \xx$, we denote by $\ec(x)$ the convex hull in $\ka_{T}^{G}$ of the $H_{B'}(x),\,B'\in \cp(T)$.

\end{defn}

\begin{defn}\label{generic}

Let $\xi\in \ka_{T}^{G}$, it is said to be \emph{generic} if $\alpha(\xi)\notin \bz,\,\forall \alpha\in \Hom(X_{*}(T),\,\bz)$.

\end{defn}

In the following, we always suppose that $\xi$ is generic.

\begin{defn}

For any point $x\in \xx$, we say that it is \emph{$\xi$-stable} if $\xi\in \ec(x)$.

\end{defn}

Let $\xx^{\xi}$ be the set of $\xi$-stable points in $\xx$, let $\xx^{\text{n-}\xi}$ be the set of non-$\xi$-stable points, both of them are $T$-invariant. 
From the definition of $\xi$-stability, it is clear that $x\in \xx^{\xi}$ if and only if $\ec(x)$ contains the image in $\ka_{T}^{G}$ of the alcove in which $\xi$ sits in. In particular, $\dim(\ec(x))=\dim(\ka_{T}^{G})$, so $T/Z_{G}$ acts freely on $\xx^{\xi}$, where $Z_{G}$ is the center of $G$. Moreover, given $\xi, \xi'\in \ka_{T}^{G}$, there exists $w$ in the extended affine Weyl group $\widetilde{W}$ such that $w$ translates the alcove containing $\xi$ to that containing $\xi'$. So $\xx^{\xi}$ is independent of the choice of $\xi$.

The ind-$k$-scheme structure of both $\xx^{\xi}$ and $\xx^{\xi}/T$ will be the main concern of the rest of the section. Roughly speaking, we will show that the notion of $\xi$ stability is the same as that of stability in the geometric invariant theory of Mumford. This allows us to conclude that $\xx^{\xi}$ is an open sub-ind-$k$-scheme of $\xx$ and the quotient $\xx^{\xi}/T$ exists as an ind-$k$-scheme.
More precisely, for a particular choice of the injective system $\xx_{n}$ such that $\xx=\varinjlim \xx_{n}$, we prove that the $\xi$-stability on $\xx_{n}$ coincides with the stability in the sense of Mumford under the action of a ``twisted'' torus $S_{n}\subset T$ with respect to the Pl\"ucker embedding. This is the same as the stability under the action of $T$ with respect to a ``twisted'' Pl\"ucker embedding. More precisely, it is the composition of the Pl\"ucker embedding with a further ``twisting'': $\bp^{N}\to \bp^{N}$. We should point out that the torus $S_{n}$ differs for different $n$. Our result is summarised in the following proposition.

\begin{prop}\label{mainquotient}

Let $G$ be a classical group over $k$. The subset $\xx^{\xi}$ is in fact an open sub-ind-$k$-scheme of $\xx$. The geometric quotient $\xx^{\xi}/T$ of $\xx^{\xi}$ by $T$ exists as an ind-$k$-scheme. In fact, it is the direct limit of a family of projective varieties over $k$.

\end{prop}

Although the proof is a case by case analysis, the main idea remains the same. So we will give a detailed proof only for $\gl_{d}$, and indicate the modifications for the other classical groups.

\begin{rem}
One may try to look for a line bundle on $\xx$ such that the $\xi$-stability on $\xx$ coincides with the stability under the action of $T$ with respect to this line bundle. It seems to us that this is impossible. The Picard group of $\xx$ is essentially generated by the determinant line bundle.  
A simple calculation using the result in \S 1.2 on $T$ action on the fiber $\cl_{\chi},\chi\in X_{*}(T)$ shows that, the stability condition in this setting coincides with $0$-stability.

\end{rem}

\subsection{The group $\gl_{d}$}

Let $G=\gl_{d}$, let $T$ be the maximal torus of the diagonal matrices. The affine grassmannian parametrizes the lattices in $F^{d}$, i.e.
$$
\xx=\{L\subset F^{d}\mid L \text{ is an }\co\text{-module of finite type such that }L\cdot F=F^{d}\}.
$$

Recall that we have the map $\ind:\xx\to \bz$ sending a lattice in $F^{d}$ to its index. For $n\in \bz$, let $\xx^{(n)}=\ind^{-1}(n)$. Since all the connected components of $\xx$ are translations of the neutral connected component $\xx^{(0)}$, it is thus enough to study $\xx^{(0)}=\xx^{\SL_{d}}$. Let $\{e_{i}\}_{i=1}^{d}$ be the natural basis of $F^{d}$ over $F$.

\begin{prop}\label{stA}

Let $\xi\in \kt$ be such that $\sum_{i=1}^{d}\xi_{i}=0$. A lattice $L\in \xx$ of index $0$ is $\xi$-stable if and only if for any permutation $\tau\in \ks_{d}$, we have 
$$
\xi_{\tau(1)}+\cdots+ \xi_{\tau(i)}\leq \ind(L\cap (Fe_{\tau(1)}\oplus \cdots \oplus Fe_{\tau(i)})),\quad i=1,\cdots, d.
$$

\end{prop}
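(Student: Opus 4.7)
I will first identify $H_{B_\tau}(L)$ in coordinates. For $G=\gl_d$, Borels containing $T$ are indexed by permutations $\tau\in\ks_d$: take $B_\tau$ with positive coroots $\{e_{\tau(a)}-e_{\tau(b)}\}_{a<b}$ and flag $V_j^\tau:=Fe_{\tau(1)}\oplus\cdots\oplus Fe_{\tau(j)}$. The Iwasawa decomposition of $L$ with respect to $B_\tau$ identifies $H_{B_\tau}(L)$ with the vector in $\ka_T^G$ whose $\tau(j)$-th coordinate equals $a_j^\tau-a_{j-1}^\tau$, where $a_j^\tau:=\ind(L\cap V_j^\tau)$, $a_0^\tau=0$ and $a_d^\tau=\ind(L)=0$. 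Writing $S_j^\tau(v):=v_{\tau(1)}+\cdots+v_{\tau(j)}$, the proposition asserts that $\xi\in\ec(L)=\mathrm{conv}\{H_{B_\tau}(L)\}_{\tau}$ if and only if $S_j^\tau(\xi)\leq a_j^\tau$ for every $\tau,j$.

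\textbf{Forward direction.} By linearity of $S_j^\tau$ and convexity, it suffices to show $S_j^\tau(H_{B_\sigma}(L))\leq a_j^\tau$ for each pair $(\sigma,\tau)$. I will iterate Arthur's proposition along a minimal gallery from $B_\sigma$ to $B_\tau$: at each step a single coroot flips from negative to positive, and by minimality stays positive up to $B_\tau$. This expresses $H_{B_\tau}(L)-H_{B_\sigma}(L)$ as a non-negative integer combination of coroots positive for $B_\tau$, namely vectors $e_{\tau(a)}-e_{\tau(b)}$ with $a<b$. Since $S_j^\tau(e_{\tau(a)}-e_{\tau(b)})=\mathbf{1}_{a\leq j}-\mathbf{1}_{b\leq j}\geq 0$ for $a<b$, subtracting these can only decrease $S_j^\tau$, giving $S_j^\tau(H_{B_\sigma}(L))\leq S_j^\tau(H_{B_\tau}(L))=a_j^\tau$.

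\textbf{Reverse direction.} I will argue by contraposition. Suppose $\xi$ satisfies all the inequalities but $\xi\notin\ec(L)$; by Hahn--Banach in the finite-dimensional space $\ka_T^G$, some linear form $\ell(v)=\sum_i c_i v_i$ satisfies $\ell(\xi)>\ell(H_{B_\tau}(L))$ for every $\tau$. Choose $\tau$ so that $c_{\tau(1)}\geq c_{\tau(2)}\geq\cdots\geq c_{\tau(d)}$. For any $v$ with $\sum_i v_i=0$, Abel summation gives
\[
\ell(v)=\sum_{j=1}^{d-1}(c_{\tau(j)}-c_{\tau(j+1)})\,S_j^\tau(v),
\]
and all coefficients $c_{\tau(j)}-c_{\tau(j+1)}\geq 0$. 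The hypothesis $S_j^\tau(\xi)\leq a_j^\tau=S_j^\tau(H_{B_\tau}(L))$ then forces $\ell(\xi)\leq\ell(H_{B_\tau}(L))$, contradicting the separation. Hence $\xi\in\ec(L)$.

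\textbf{Expected obstacle.} The argument is short once $H_{B_\tau}(L)$ is explicitly described; the real bookkeeping is (i) the Iwasawa identification $H_{B_\tau}(L)_{\tau(j)}=a_j^\tau-a_{j-1}^\tau$, obtained by reading off the valuations of the diagonal of the $\tau$-triangular factor, and (ii) checking that a minimal gallery from $B_\sigma$ to $B_\tau$ produces only coroots positive with respect to $B_\tau$ (each coroot flips at most once, and always from negative to positive toward $B_\tau$). Once these are in hand, both directions reduce to a one-line monotonicity statement for the partial sums $S_j^\tau$.
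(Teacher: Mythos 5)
Your proof is correct and rests on the same key identity as the paper's: the partial sums of the coordinates of $H_{B_\tau}(L)$ in the $\tau$-order equal $\ind(L\cap(Fe_{\tau(1)}\oplus\cdots\oplus Fe_{\tau(i)}))$. The paper stops there with ``the proposition follows,'' whereas you additionally spell out the implicit convexity step — that membership in the convex hull of the points $H_{B'}(L)$ is equivalent to the family of partial-sum inequalities — via Arthur's proposition along a minimal gallery in one direction and a separating hyperplane with Abel summation in the other, which is a faithful filling-in of the omitted details rather than a different route.
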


\begin{proof}

Let $B'=\tau(B)$, let $H_{B'}(L)=(n_{1},\cdots,n_{d})$, then we have
$$
n_{\tau(1)}+\cdots+n_{\tau(i)}=\ind(L\cap (Fe_{\tau(1)}\oplus \cdots \oplus Fe_{\tau(i)})),
$$
and the proposition follows.
\end{proof}

We use the same notation as in \S 2.1. We will prove the following result, which implies the proposition \ref{mainquotient}.

\begin{prop}\label{stablequotient}

The subset $\xx_{n}^{\xi}$ is an open subvariety of $\xx_{n}$ and the quotient $\xx_{n}^{\xi}/T$ is a projective $k$-variety. 

\end{prop}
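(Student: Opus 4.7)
The plan is to realize $\xx_n^\xi/T$ as a GIT quotient of the projective variety $\xx_n$ by the $T$-action, with respect to an ample $T$-linearization twisted by the character $\xi$. The first step is to fix an ample $T$-equivariant line bundle: $\xx_n$ embeds as a closed subvariety of a finite-dimensional Grassmannian $\gr(nd,\epsilon^{-m}L_0/\epsilon^n L_0)$ for $m$ large enough, and one takes the restriction $\mathcal{L}_{\det}$ of the Plücker line bundle, which is naturally $T$-equivariant. After replacing $\xi$ by a nearby generic rational point in the same stability chamber (which does not change $\xx^\xi$) and clearing denominators, one finds a positive integer $N$ such that twisting $\mathcal{L}_{\det}^{\otimes N}$ by the character of $T/Z_G$ associated to $N\xi$ yields an ample $T$-linearized line bundle $\mathcal{L}_\xi$ on $\xx_n$.

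The heart of the argument is the equality
\[
\xx_n^{s}(\mathcal{L}_\xi) = \xx_n^{ss}(\mathcal{L}_\xi) = \xx_n^{\xi},
\]
proved by the Hilbert--Mumford numerical criterion. For $\lambda\in X_*(T/Z_G)$, the limit $\lim_{t\to 0}\lambda(t)\cdot L$, computed inside $\bp(V)$, is the $T$-fixed lattice corresponding to the Iwasawa point $H_{B_\lambda}(L)\in\ka_T^G$, where $B_\lambda\in\cp(T)$ is the Borel for which $\lambda$ is dominant. A direct Plücker-coordinate computation then shows that, up to the positive factor $N$ and a standard duality identifying $\ka_T^G$ with its dual, the Mumford weight $\mu^{\mathcal{L}_\xi}(L,\lambda)$ is the pairing $\langle \lambda,\, H_{B_\lambda}(L)-\xi\rangle$. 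As $\lambda$ sweeps $X_*(T/Z_G)$, the Borel $B_\lambda$ sweeps $\cp(T)$, and $H_{B_\lambda}(L)$ traces out all the extremal points of $\ec(L)$; thus the Mumford polytope of $L$ is $\ec(L)$, and GIT-stability translates into $\xi\in\mathrm{int}\,\ec(L)$. Genericity of $\xi$ then gives $\xi\in\ec(L)$, i.e.\ $L\in\xx_n^\xi$, and also collapses semistability onto stability.

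Mumford's theorem accordingly produces a projective GIT quotient of $\xx_n^\xi$ by $T$. Since $T/Z_G$ acts freely on $\xx^\xi$ (as observed in the excerpt), this GIT quotient is already the geometric quotient, and $\xx_n^\xi/T$ is therefore a projective $k$-variety.

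The principal obstacle is the Hilbert--Mumford computation above: identifying the $\lambda$-limit of $L$ with the Iwasawa retraction and computing the resulting Mumford weight at the fixed point. For $\gl_d$, Proposition~\ref{stA} expresses $H_B(L)$ via indices of intersections $L\cap(Fe_{\tau(1)}\oplus\cdots\oplus Fe_{\tau(i)})$, which makes the matching between the Mumford polytope and $\ec(L)$ explicit; the analogous verification for the other classical groups is the case-by-case work the paper signals.
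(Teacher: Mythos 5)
Your route is genuinely different from the paper's, and in outline it is viable. You linearize the Plücker bundle for the full torus and twist by a (rationalized, cleared-denominator) character encoding $\xi$, then read off stability from the weight polytope via Hilbert--Mumford; the paper instead keeps the \emph{untwisted} Plücker polarization and encodes $\xi$ in the choice of a subtorus $S_{n}\subset T$ (\ref{soustore}) isogenous to $T/Z_{G}$, whose weights $r_{i},s_{i}$ are cooked from rational numbers $x_{i}$ attached to $\xi$ and to the truncation $n$, and then matches the resulting numerical inequalities (Proposition \ref{st1}) with the $\xi$-stability inequalities of Proposition \ref{stA} through the explicit dictionary (\ref{eg1}) coming from the embedding $\varrho_{n}^{\perp}$. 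Your version is closer to the standard variation-of-GIT/moment-polytope picture and makes the equivalence ``GIT-(semi)stable $\Longleftrightarrow$ $\xi\in \ec(L)$'' conceptually transparent and uniform in $n$; the paper's subtorus trick stays strictly within Mumford's untwisted setting and disposes of the central direction automatically, at the price of the $r_{i},s_{i}$ bookkeeping.

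Two points in your sketch need repair, and they are exactly where the paper spends its effort. First, the twist as you state it does not work: on the ambient grassmannian the Plücker linearization has a nontrivial central weight (the center $\bbg_{m}\subset T$ acts on the relevant top exterior power by a fixed power of the scalar, and the truncation contributes the uniform shift visible as $n(d-1)i$ in (\ref{eg1})), and a character of $T/Z_{G}$ is trivial on the center, so it cannot cancel this. With the full torus $T$ and your $\cl_{\xi}$, every central one-parameter subgroup gives a constant nonzero Mumford weight and the semistable locus is empty. The fix is to add a central correction depending on $n$ (a suitable power of $\det$), or equivalently to test only cocharacters of a complement of the center -- which is precisely what the paper's isogeny $S_{n}\to T/Z_{G}$ and the shift in (\ref{eg1}) accomplish. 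Second, the identification of the Mumford weight with $\langle \lambda, H_{B_{\lambda}}(L)-\xi\rangle$ and of the weight polytope with $\ec(L)$ is asserted rather than proved; it is true, but establishing it amounts to the same index computations as Proposition \ref{stA}, i.e. it is the core of the proof, not a side remark. Relatedly, for the collapse of semistability onto stability, note that root-genericity $\alpha(\xi)\notin\bz$ alone does not prevent $\xi$ from lying on a facet of some $\ec(L)$: the facets are cut out by integrality of partial sums $\xi_{\tau(1)}+\cdots+\xi_{\tau(i)}$, not by root hyperplanes. The paper handles this by using the $\widetilde{W}$-translation invariance of $\xx^{\xi}/T$ to normalize the $\xi_{i}$ to be positive and small; your ``nearby rational point in the same chamber'' should be chosen with respect to this finer hyperplane arrangement, and this should be said explicitly.
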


\subsubsection{A non-standard quotient of the grassmannian}

Let $E_{1},\cdots,E_{d}$ be vector spaces over $k$ of dimension $\dim(E_{i})=N_{i}$. Let $X$ be the grassmannian of sub vector spaces of dimension $n$ of $E_{1}\oplus \cdots \oplus E_{d}$. We have the Pl\"ucker immersion $\iota: X\to \bp^{N}$, $N=\binom{N_{1}+\cdots+N_{d}}{n}-1$, and the line bundle $\cl=\iota^{*}\co_{\bp^{N}}(1)$ is naturally endowed with a $\aut(E_{1}\oplus\cdots \oplus E_{d})$-linearization.

The torus $T=\bbg_{m}^{d}$ acts on $E_{1}\oplus\cdots \oplus E_{d}$ with its $i$-th factor acts as homothetie on $E_{i}$, thus it acts on $X$. Given $r_{i}, s_{i}\in \bn$, let $S\subset T$ be the sub torus of $T$ defined by

\begin{equation}\label{soustore}
S=\left\{\begin{bmatrix}t_{1}^{r_{1}}&&&\\&\ddots&&\\ &&t_{d-1}^{r_{d-1}}&\\&&&t_{1}^{-s_{1}}\cdots t_{d-1}^{-s_{d-1}}\end{bmatrix};\;t_{i}\in k^{\times}\right\}.
\end{equation}

We will give a geometric description of the (semi-)stable points of $X$ under the action of $S$ with respect to the polarization given by the line bundle $\cl$, using the criteria of Hilbert-Mumford. Let $Z$ be a projective algebraic variety over $k$ endowed with the action of a reductive group $H$ over $k$, let $\cl$ be an ample $H$-equivariant line bundle over $Z$. Let $\lambda:\bbg_{m}\to H$ be a homomorphism of algebraic group, then $\bbg_{m}$ acts on $Z$ via the morphism $\lambda$. For any point $x\in Z$, the point $x_{0}:=\lim_{t\to 0}\lambda(t)x$ exists since $Z$ is projective. Obviously $x_{0}\in Z^{\bbg_{m}}$, thus $\bbg_{m}$ acts on the stalk $\cl_{x_{0}}$. The action is given by a character of $\bbg_{m}$, $\alpha:t\to t^{r}$, for some $r\in \bz$. Let $\mu^{\cl}(x,\lambda)=-r$.

\begin{thm}[Hilbert-Mumford, \cite{m}, p. 49-54]

Let $Z$ be a projective algebraic variety over $k$ endowed with the action of a reductive group $H$ over $k$, let $\cl$ be an ample $H$-equivariant line bundle over $Z$. Let $Z^{ss}$ (resp. $Z^{st}$) be the open subvariety of $Z$ of the semi-stable (resp. stable) points. Then for any geometric point $x\in Z$, we have

\begin{enumerate}

\item $x\in Z^{ss} \iff \mu^{\cl}(x,\lambda)\geq 0, \,\forall \, \lambda\in \Hom(\bbg_{m}, H) $,

\item $x\in Z^{st} \iff \mu^{\cl}(x,\lambda)> 0, \,\forall \, \lambda\in \Hom(\bbg_{m}, H) $.

\end{enumerate}

\end{thm}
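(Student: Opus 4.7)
The statement is the classical numerical criterion of Hilbert--Mumford, and the plan is to follow the standard argument from Mumford's GIT while isolating the one place where the reductivity of $H$ is essential. Replacing $\cl$ by a sufficiently positive tensor power, we may assume $|\cl|$ gives an $H$-equivariant closed embedding $Z \hookrightarrow \bp(V)$ for some finite-dimensional $H$-representation $V$. Fix a nonzero lift $\tilde{x} \in V$ of $x$. Since $H$-invariant sections of $\cl^{\otimes n}$ correspond to $H$-invariant homogeneous polynomials on $V$ of degree $n$, the standard GIT dictionary reads: $x$ is semi-stable if and only if $0 \notin \overline{H \cdot \tilde{x}}$, and $x$ is stable if and only if the orbit $H \cdot \tilde{x}$ is closed in $V$ and its stabiliser in $H$ is finite.

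For a one-parameter subgroup $\lambda : \bbg_{m} \to H$, decompose $\tilde{x} = \sum_{\chi \in \bz} v_{\chi}$ into weight vectors for $\lambda$, and let $\chi_{0} = \min\{\chi : v_{\chi} \neq 0\}$. A direct computation in homogeneous coordinates identifies the limit $x_{0} = \lim_{t \to 0}\lambda(t) x$ with $[v_{\chi_{0}}]$ and shows that $\mu^{\cl}(x,\lambda) = -\chi_{0}$; in particular $\mu^{\cl}(x,\lambda) < 0$ exactly when $\lim_{t \to 0}\lambda(t) \tilde{x} = 0$ in $V$. The easy direction of both claims is now immediate: if $\mu^{\cl}(x,\lambda) < 0$ for some $\lambda$, then $\tilde{x}$ specialises to $0$, violating semi-stability; and if $\mu^{\cl}(x,\lambda) \leq 0$ for some non-trivial $\lambda$, then either $\tilde{x}$ specialises to a distinct orbit in $\overline{H\cdot\tilde{x}}$ or $\lambda$ fixes $\tilde{x}$, in either case violating stability.

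The substantive direction is the converse: assuming $0 \in \overline{H \cdot \tilde{x}}$, one must produce a destabilising 1-PS. By the valuative criterion of properness applied to the orbit morphism $H \to H\cdot\tilde{x}$, there exists $g \in H(k((t)))$ such that $g\cdot\tilde{x}$ extends to a $k[[t]]$-point of $V$ with value $0$ at $t=0$. The key input is the Cartan decomposition of the reductive group $H$ over $k((t))$:
$$
H(k((t))) = H(k[[t]]) \cdot \lambda(t) \cdot H(k[[t]]),
$$
where $\lambda$ is a cocharacter of a maximal torus of $H$ determined by the double coset of $g$. Writing $g = h_{1}\,\lambda(t)\,h_{2}$ and evaluating $h_{1}, h_{2}$ at $t = 0$, the degeneration $g \cdot \tilde{x} \to 0$ becomes $\lim_{t \to 0}\lambda(t)(h_{2}(0)\tilde{x}) = 0$, i.e.\ $\mu^{\cl}(h_{2}(0)\cdot x, \lambda) < 0$. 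The equivariance identity $\mu^{\cl}(hx,\lambda) = \mu^{\cl}(x, h^{-1}\lambda h)$ then yields a destabilising 1-PS for $x$ itself. The stable case follows by the variant in which $H\cdot\tilde{x}$ is merely not closed, reducing again via Cartan to a cocharacter with $\mu^{\cl}(x, \lambda) \leq 0$. The main obstacle is precisely this last reduction, from an arbitrary $k((t))$-point of $H$ to a 1-parameter subgroup defined over $k$: it is the content of the Cartan decomposition and is the step that cannot be performed without the reductivity of $H$.
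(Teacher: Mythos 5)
The paper does not actually prove this theorem --- it cites Mumford's \emph{Geometric invariant theory}, pages 49--54 --- and your sketch is a faithful outline of exactly that standard argument: reduction to a linear action on $\bp(V)$, the affine-cone reformulation of (semi)stability, the weight-space computation of $\mu^{\cl}(x,\lambda)$, the easy implication, and the Iwahori/Cartan decomposition $H(k((t)))=H(k[[t]])\,\lambda(t)\,H(k[[t]])$ to convert a destabilising formal arc into a one-parameter subgroup defined over $k$. The only step worth tightening is the stable case, where ``orbit closed but stabiliser infinite'' needs an additional input (e.g.\ Matsushima's criterion, or Mumford's own handling) to guarantee the positive-dimensional stabiliser contains a torus, but this is precisely how the cited proof proceeds, so your approach coincides with the paper's.
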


\begin{lem}

We have
$$
V\in X^{S}\iff V=V_{1}\oplus\cdots\oplus V_{d},
$$
where $V_{i}\subset E_{i}$ is a sub vector space.
\end{lem}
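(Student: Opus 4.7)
The plan is to prove this by the standard weight-space decomposition argument for torus actions, the only subtle point being that the characters of $S$ on the factors $E_1,\ldots,E_d$ are pairwise distinct.

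The easy direction ($\Leftarrow$) is immediate: each $E_i$ is a single weight subspace for $S$ (the $i$-th factor of the ambient torus $T=\bbg_m^d$ acts by homothety on $E_i$, and $S\subset T$), so any $V_i\subset E_i$ is automatically $S$-stable, hence so is $V=V_1\oplus\cdots\oplus V_d$.

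For the converse ($\Rightarrow$), I would first compute the character $\chi_i\in X^{*}(S)$ by which $S$ acts on $E_i$. Identifying $X^{*}(S)\otimes\br\cong \br^{d-1}$ via the coordinates $(t_1,\ldots,t_{d-1})$ used in the parametrization \eqref{soustore}, one reads off
$$
\chi_i = r_i\, e_i^{*}\quad (1\le i\le d-1),\qquad \chi_d = -(s_1 e_1^{*}+\cdots+s_{d-1}e_{d-1}^{*}).
$$
Under the (implicit) assumption that the $r_i$ and $s_i$ are strictly positive, the first $d-1$ characters are nonzero and supported on mutually disjoint coordinates, while $\chi_d$ has all coordinates $\le 0$ and is nonzero; hence the $d$ characters $\chi_1,\ldots,\chi_d$ are pairwise distinct.

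Now since $S$ is a torus, any $S$-module decomposes canonically into its weight spaces. Applied to $E:=E_1\oplus\cdots\oplus E_d$, the distinctness of the $\chi_i$ shows that the weight-space decomposition of $E$ under $S$ is precisely $E=\bigoplus_{i=1}^{d} E_i$ (with $E_i$ being the $\chi_i$-weight space). If $V\subset E$ is $S$-stable, then $V$ inherits the weight-space decomposition $V=\bigoplus_\chi V^\chi$, and each $V^\chi$ sits inside the corresponding weight space of $E$. Setting $V_i:=V\cap E_i=V^{\chi_i}$ yields $V=V_1\oplus\cdots\oplus V_d$. The main (essentially bookkeeping) obstacle is the explicit verification that the characters $\chi_1,\ldots,\chi_d$ are distinct — everything else is formal from the representation theory of tori.
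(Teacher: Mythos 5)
Your proof is correct. The paper states this lemma without proof, and your argument is exactly the standard one it implicitly invokes: a subspace is $S$-fixed in the grassmannian iff it splits into its intersections with the $S$-weight spaces of $E_1\oplus\cdots\oplus E_d$, and these weight spaces are precisely the $E_i$ because the characters $\chi_1,\dots,\chi_d$ are pairwise distinct --- your explicit check of this distinctness (using that the $r_i$, $s_i$ are strictly positive, which is indeed the situation in the paper's application where $s_i/r_i>0$) is the right point to isolate.
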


For $\mathbf{n}=(n_{1},\cdots,n_{d-1})\in \bz^{d-1}$ such that the cocharacter $\lambda_{\mathbf{n}}\in X_{*}(S)$ defined by 
$$
\lambda_{\mathbf{n}}(t)=\begin{bmatrix}t^{n_{1}r_{1}}&&&\\&\ddots&&\\ &&t^{n_{d-1}r_{d-1}}&\\&&&t^{-\sum_{i=1}^{d-1}n_{i}s_{i}}\end{bmatrix}
$$
is regular. The stability condition is equivalent to the condition that $-\mu^{\cl}(V,\lambda_{\mathbf{n}})<0$ for all such $\mathbf{n}\in \bz^{d-1}$. Up to conjugation, we can suppose that
 
\begin{equation}
n_{1}r_{1}<\cdots<n_{i}r_{i}<-\sum_{j=1}^{d-1}n_{j}s_{j}<n_{i+1}r_{i+1}<\cdots<n_{d-1}r_{d-1}.\label{cond1}
\end{equation}

Let
$
\lim_{t\to 0}\lambda_{\mathbf{n}}(t)V=V_{1}\oplus\cdots\oplus V_{d},
$ where $V_{i}\subset E_{i}$ is a sub vector space of dimension $a_{i}$. We have the relation

\begin{eqnarray}\label{d}
a_{1}&=&\dim(V)-\dim(V\cap (E_{2}\oplus\cdots\oplus E_{d})),\\&\vdots&\nonumber\\ a_{i}&=&\dim(V\cap (E_{i}\oplus\cdots\oplus E_{d}))-\dim(V\cap (E_{i+1}\oplus\cdots\oplus E_{d})),\nonumber \\a_{d}&=&\dim(V\cap (E_{i+1}\oplus\cdots\oplus E_{d}))-\dim(V\cap (E_{i+1}\oplus\cdots\oplus E_{d-1})),\nonumber\\ a_{i+1}&=&\dim(V\cap (E_{i+1}\oplus\cdots\oplus E_{d-1}))-\dim(V\cap (E_{i+2}\oplus\cdots\oplus E_{d-1}))\nonumber \\&\vdots&\nonumber\\
a_{d-1}&=&\dim(V\cap E_{d-1}).\nonumber
\end{eqnarray}

So the stability condition can be written as

\begin{eqnarray}-\mu^{\cl}(V,\lambda_{\mathbf{n}})&=&\sum_{i=1}^{d-1}a_{i}n_{i}r_{i}-a_{d}\sum_{j=1}^{d-1}n_{j}s_{j}\nonumber \\
&=&\sum_{i=2}^{d-1}a_{i}(n_{i}r_{i}-n_{1}r_{1})-a_{d}\left(n_{1}r_{1}+\sum_{j=1}^{d-1}n_{i}s_{i}\right)+nn_{1}r_{1}<0,\label{cond2}
\end{eqnarray}
(We use the relation $n=a_{1}+\cdots+a_{d}$ in the second equality.)

The equality (\ref{cond2}) is a question of maximal value of a linear functional on a convex region, so it suffices to look at the condition at the boundary, i.e.

\begin{eqnarray}\label{cond3}
n_{1}r_{1}=\cdots=n_{i_{0}}r_{i_{0}}<n_{i_{0}+1}r_{i_{0}+1} & =&\cdots  = -\sum_{j=1}^{d-1}n_{j}s_{j}\\ \nonumber &=&\cdots
 =n_{d-1}r_{d-1}, \quad  1\leq i_{0}\leq i;\end{eqnarray}

and

\begin{eqnarray}\label{cond4}
n_{1}r_{1}=\cdots=-\sum_{j=1}^{d-1}n_{j}s_{j}=\cdots &=& n_{i_{0}}r_{i_{0}}<n_{i_{0}+1}r_{i_{0}+1}\\
\nonumber &=& \cdots =n_{d-1}r_{d-1},
\quad  i+1\leq i_{0}\leq d-1.\end{eqnarray}

The inequality (\ref{cond3}) gives

$$
n_{1}<0,\quad n_{i_{0}+1}r_{i_{0}+1}(1+\sum_{j=i_{0}+1}^{d-1}s_{j}/r_{j})=-n_{1}r_{1}\sum_{j=1}^{i_{0}}s_{j}/r_{j},
$$

so the inequality (\ref{cond2}) implies

\begin{equation}\label{s1}
a_{i_{0}+1}+\cdots+a_{d}<\frac{n(1+\sum_{j=i_{0}+1}^{d-1}s_{j}/r_{j})}{1+\sum_{j=1}^{d-1}s_{j}/r_{j}},\quad 1\leq i_{0}\leq i.
\end{equation}

The inequality (\ref{cond4}) gives
$$
n_{1}<0,\quad n_{i_{0}+1}r_{i_{0}+1}\sum_{j=i_{0}+1}^{d-1}s_{j}/r_{j}=-n_{1}r_{1}(1+\sum_{j=1}^{i_{0}}s_{j}/r_{j}),
$$

so the inequality (\ref{cond2}) implies

\begin{equation}\label{s2}
a_{i_{0}+1}+\cdots+a_{d-1}<\frac{n(\sum_{j=i_{0}+1}^{d-1}s_{j}/r_{j})}{1+\sum_{j=1}^{d-1}s_{j}/r_{j}},\quad i+1\leq i_{0}\leq d-1.
\end{equation}

We can express the inequalities (\ref{s1}) and (\ref{s2}) as inequalities in $\dim(V\cap (E_{i_{1}}\oplus\cdots\oplus E_{i_{r}}))$ with the help of the dimension relation (\ref{d}).

Let $x=(x_{1},\cdots,x_{d})\in \kt$ with 
$$
x_{d}=\frac{1}{1+\sum_{i=1}^{d-1}s_{i}/r_{i}},\quad x_{i}=\frac{s_{i}/r_{i}}{1+\sum_{i=1}^{d-1}s_{i}/r_{i}},\; i=1,\cdots,d-1.
$$ 
We remark that $\sum_{i=1}^{d}x_{i}=1$. The above calculations, after suitable permutation, can be reformulated as:

\begin{prop}\label{st1}

A sub vector space $V\subset E_{1}\oplus\cdots\oplus E_{d}$ is $S$-stable if and only if for any permutation $\tau\in \mathfrak{S}_{d}$, we have
$$
\dim(V)(x_{\tau(1)}+\cdots+x_{\tau(i)})> \dim(V\cap(E_{\tau(1)}\oplus\cdots\oplus E_{\tau(i)})),\quad i=1,\cdots,d-1.
$$
\end{prop}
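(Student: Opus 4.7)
I would prove the proposition by combining the preceding analysis into three coordinated steps.

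First, by the Hilbert--Mumford criterion, $V$ is $S$-stable if and only if $-\mu^{\cl}(V,\lambda)<0$ for every nontrivial cocharacter $\lambda$ of $S$. Every such $\lambda$ is of the form $\lambda_{\mathbf n}$, $\mathbf n\in\bz^{d-1}$. After relabeling the first $d-1$ indices --- which permutes the pairs $(n_{j},r_{j},s_{j})$ consistently --- one may assume that the diagonal entries of $\lambda_{\mathbf n}(t)$ are arranged as in (\ref{cond1}), for some $i\in\{0,\dots,d-1\}$. The limit $V_{0}=\lim_{t\to 0}\lambda_{\mathbf n}(t)V$ decomposes as $V_{1}\oplus\cdots\oplus V_{d}$ with $\dim V_{j}=a_{j}$ given by (\ref{d}), and the Plücker calculation yields (\ref{cond2}), a linear function of $\mathbf n$.

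Next, since $-\mu^{\cl}(V,\lambda_{\mathbf n})$ is linear in $\mathbf n$ on the open cone defined by (\ref{cond1}), its strict negativity on the cone reduces to verification on the $d-1$ extreme rays of the closure. These extreme rays are the degenerate configurations (\ref{cond3}) for $1\le i_{0}\le i$ and (\ref{cond4}) for $i+1\le i_{0}\le d-1$; evaluating (\ref{cond2}) on them and carrying out the algebraic simplification (using $n_{1}<0$) yields exactly the inequalities (\ref{s1}) and (\ref{s2}).

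Finally, I would rewrite (\ref{s1}) and (\ref{s2}) in the form of the proposition. The right-hand sides become $n(x_{i_{0}+1}+\cdots+x_{d})$ and $n(x_{i_{0}+1}+\cdots+x_{d-1})$ for the $x_{j}$ defined just above; telescoping (\ref{d}) identifies the left-hand sides with $\dim(V\cap(E_{i_{0}+1}\oplus\cdots\oplus E_{d}))$ and $\dim(V\cap(E_{i_{0}+1}\oplus\cdots\oplus E_{d-1}))$ respectively. Letting the relabeling vary over $\ks_{d-1}$ and the parameter $i$ over $\{0,\dots,d-1\}$ makes the resulting index sets exhaust all proper nonempty subsets of $\{1,\dots,d\}$, producing the inequality of the statement for each permutation $\tau\in\ks_{d}$ and each $i\in\{1,\dots,d-1\}$.

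The hard part is the extreme-ray analysis: one must verify that (\ref{cond3}) and (\ref{cond4}) genuinely describe the extreme rays of the closed cone cut out by the non-strict version of (\ref{cond1}), and that strict negativity of a linear functional on an open cone is equivalent to its values at these extreme rays. Once this codimension-counting is done, the rest is bookkeeping with the identities (\ref{d}) and the observation that the relabeling of the first $d-1$ indices together with the choice of $i$ covers all proper nonempty subsets of $\{1,\dots,d\}$.
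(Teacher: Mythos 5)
Your three steps are the same route the paper takes (there the proposition is obtained as a reformulation of exactly this Hilbert--Mumford computation), but the step you single out as ``the hard part'' is, as you state it, false, and false precisely in the direction the proposition needs. For a homogeneous linear functional $\ell$ and a pointed simplicial cone, strict negativity at generators of the extreme rays does imply strict negativity on the punctured closed cone, hence on the open chamber; the converse fails: $\ell$ can be strictly negative on the open cone yet vanish on an extreme ray (e.g.\ $\ell(n_{1},n_{2})=-n_{2}$ on the open quadrant $n_{1}>0,\ n_{2}>0$). So if you test only the regular cocharacters $\lambda_{\mathbf{n}}$ satisfying (\ref{cond1}), stability gives you only the \emph{non-strict} versions of (\ref{s1}) and (\ref{s2}), not the strict inequalities of the statement. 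The repair is not a convexity fact but the criterion itself: the degenerate configurations (\ref{cond3}), (\ref{cond4}) are nonzero cocharacters of $S$, so Hilbert--Mumford applies to them directly; what needs checking is that the linear expression (\ref{cond2}) still computes $-\mu^{\cl}(V,\lambda_{\mathbf{n}})$ on the \emph{closed} chamber. This holds because for a torus action $\mu^{\cl}(V,\cdot)$ is the maximum, over the Pl\"ucker coordinates not vanishing at $V$, of linear forms in $\lambda$ (up to sign, their $S$-weights), hence agrees with a single linear form on the closure of each chamber of linearity; alternatively recompute the limit on the wall, where the limit subspace refines but the fiber weight is unchanged. With that, ``stable $\Rightarrow$ inequalities'' is immediate, and ``inequalities $\Rightarrow$ stable'' is your positive-combination argument, for which you should also record that the chamber closure is pointed --- this is exactly where $1+\sum_{j}s_{j}/r_{j}\neq 0$ enters.

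Your final bookkeeping claim also does not close with the ranges as written. The configurations (\ref{cond3}) with $1\leq i_{0}\leq i$ and (\ref{cond4}) with $i+1\leq i_{0}\leq d-1$ give only $d-2$ genuine two-block degenerations per chamber (the case $i_{0}=d-1$ of (\ref{cond4}) forces $\mathbf{n}=0$ by pointedness); the missing extreme ray is the one where $-\sum_{j}n_{j}s_{j}$ ties with the lower block, i.e.\ the case $i_{0}=i$. Since the relabelling only permutes the first $d-1$ indices, the subset $\{1,\dots,d-1\}$ is then never produced --- already for $d=2$ you would obtain only one of the two inequalities of the proposition. So you must allow $i_{0}=i$ in (\ref{cond4}), equivalently state (\ref{s2}) for $i\leq i_{0}\leq d-1$; with that the upper sets of all chamber orderings do exhaust the proper nonempty subsets of $\{1,\dots,d\}$. (Both points are imprecisions present in the source computation as well, but your write-up leans on them explicitly, so they need to be fixed for the argument to be complete.)
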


The same result holds for the semi-stable points with ``$>$'' replaced by ``$\geq$''.

\subsubsection{Comparison of two notions of stability}

Recall that in \S 2.1 we have constructed a closed immersion $\varrho_{n}:\xx_{n}\to \gr_{nd(d-1), nd^{2}}$, with the image being the Springer fiber $Y_{n}$.

\begin{proof}[Proof of proposition \ref{stablequotient}]

Since the quotient $\xxs/T$ doesn't depend on the choice of $\xi$, we can suppose that $\sum_{i=1}^{d}\xi_{i}=0,\,\xi_{i}\in \bq$ is positive and small enough for $i=1,\cdots,d-1$.

Let $E_{i}=\kp^{-n}e_{i}/\kp^{n(d-1)}e_{i}$, let $V=\varrho_{n}(L)\in Y_{n}\subset\gr_{nd(d-1),nd^{2}}$. For $\tau\in \mathfrak{S}_{d}$, we have the equality

\begin{equation}\label{eg1}
\dim(V\cap (E_{\tau(1)}\oplus\cdots\oplus E_{\tau(i)}))=-\ind(L\cap(Fe_{\tau(1)}\oplus\cdots\oplus Fe_{\tau(i)}))+n(d-1)i.
\end{equation}

Let $x_{i}=\frac{n(d-1)-\xi_{i}}{nd(d-1)}$. The hypothesis on $\xi$ implies that $x_{i}\in \bq,\,x_{i}>0$ and $\sum_{i=1}^{d}x_{i}=1.$
 
Take $r_{i},s_{i}\in \bn$ such that
$$
x_{d}=\frac{1}{1+\sum_{i=1}^{d-1}s_{i}/r_{i}},\quad x_{i}=\frac{s_{i}/r_{i}}{1+\sum_{i=1}^{d-1}s_{i}/r_{i}},\quad i=1,\cdots,d-1.
$$

Let $S_{n}\subset T$ be the torus defined in (\ref{soustore}) for the above $r_{i},s_{i}$. The equality (\ref{eg1}) implies that

\begin{equation*}
\xi_{\tau(1)}+\cdots+\xi_{\tau(i)}<\ind(L\cap(Fe_{\tau(1)}\oplus\cdots\oplus Fe_{\tau(i)}))
\end{equation*}

if and only if 

\begin{equation}\label{ee2}
\dim(V)(x_{\tau(1)}+\cdots+x_{\tau(i)})>\dim(V\cap(E_{\tau(1)}\oplus\cdots\oplus E_{\tau(i)})).
\end{equation}

Combining the proposition \ref{stA} and the proposition \ref{st1}, we get
$$
L\in\xxs_{n}\Longleftrightarrow V\in Y_{n}^{st}.
$$ 

This implies that the subset $\xxs_{n}$ is an open subvariety of $\xx_{n}$ by the theory of Mumford. Moreover, since $\xi$ is supposed to be generic and $\xi_{i}$ are positive and small enough for $i=1,\cdots, d-1$, the ``$<$'' in the inequality (\ref{ee2}) is the same as ``$\leq$''. That is to say that $Y_{n}^{ss}=Y_{n}^{st}$ and so the quotient $Y_{n}^{ss}/\!/S_{n}=Y_{n}^{st}/S_{n}$ is a projective $k$-variety by the geometric invariant theory of Mumford. The following lemma shows that $\xxs_{n}/T=\xx^{\xi}_{n}/\!/S_{n}\cong Y_{n}^{ss}/\!/S_{n}$, since $T/Z_{G}$ acts freely on $\xx^{\xi}_{n}$. So $\xx_{n}^{\xi}/T$ is a projective $k$-variety.
\end{proof}

\begin{lem}

The morphism $S_{n}\to T/Z_{G}$ is an isogeny.

\end{lem}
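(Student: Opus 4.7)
The plan is to verify that $S_{n}\to T/Z_{G}$ is a homomorphism of $(d-1)$-dimensional tori with finite kernel, from which surjectivity (and hence isogeny) follows automatically. For the dimension count: $Z_{G}\subset T\cong \bbg_{m}^{d}$ is the $1$-dimensional subgroup of scalar matrices, so $\dim(T/Z_{G})=d-1$. The subtorus $S_{n}$ is parametrized by $(t_{1},\ldots,t_{d-1})\in \bbg_{m}^{d-1}$ via the formula (\ref{soustore}); the kernel of this parametrization consists of tuples with $t_{i}^{r_{i}}=1$ for all $i$, a finite group, so $\dim S_{n}=d-1$ as well.

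Once the dimensions agree, it suffices to show that the composite has finite kernel, and I will do this by checking that the pullback map on character lattices $X^{*}(T/Z_{G})\to X^{*}(S_{n})$ has nonzero determinant in suitable bases. Let $\{e_{i}^{*}\}_{i=1}^{d}$ denote the standard characters of $T$ and $\{f_{j}^{*}\}_{j=1}^{d-1}$ the coordinate characters of $S_{n}$. The restriction $X^{*}(T)\to X^{*}(S_{n})$ dual to $S_{n}\hookrightarrow T$ sends $e_{i}^{*}\mapsto r_{i}f_{i}^{*}$ for $i<d$ and $e_{d}^{*}\mapsto -\sum_{j=1}^{d-1}s_{j}f_{j}^{*}$. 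Choosing the basis $\{e_{i}^{*}-e_{d}^{*}:1\le i\le d-1\}$ of the sublattice $X^{*}(T/Z_{G})=\{\sum a_{i}e_{i}^{*}:\sum a_{i}=0\}$, the pullback becomes the $(d-1)\times(d-1)$ integer matrix $M$ whose $(i,j)$-entry is $r_{i}\delta_{ij}+s_{j}$; equivalently $M=D+\mathbf{1}\,s^{T}$ where $D=\diag(r_{1},\ldots,r_{d-1})$, $\mathbf{1}$ is the all-ones column and $s=(s_{1},\ldots,s_{d-1})^{T}$.

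The final step is a one-line computation: by the matrix determinant lemma applied to this rank-one perturbation of $D$,
$$
\det M=\left(\prod_{i=1}^{d-1}r_{i}\right)\left(1+\sum_{i=1}^{d-1}\frac{s_{i}}{r_{i}}\right),
$$
which is strictly positive since the $r_{i}$ are positive integers and the $s_{i}$ are nonnegative. Hence $M$ is invertible over $\bq$, so the pullback on character lattices is injective with finite cokernel, and dually $S_{n}\to T/Z_{G}$ has finite kernel between tori of equal dimension, proving it is an isogeny.

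There is no substantive obstacle here; the only care required is to confirm that the $r_{i},s_{i}$ arising from the choice made in the proof of Proposition \ref{stablequotient} indeed define a subtorus of full dimension $d-1$, which reduces to the strict positivity of the $r_{i}$ that is built into that choice.
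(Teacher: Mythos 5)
Your proof is correct and follows the same route as the paper: both reduce the claim to showing the induced map on character lattices $X^{*}(T/Z_{G})\to X^{*}(S_{n})$ has nonzero determinant, and both arrive at the value $\bigl(\prod_{i=1}^{d-1}r_{i}\bigr)\bigl(1+\sum_{i=1}^{d-1}s_{i}/r_{i}\bigr)$, which is nonzero by the choice of $r_{i},s_{i}$. Your write-up merely makes explicit the basis choice and the rank-one determinant computation that the paper leaves as ``a direct calculation.''
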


\begin{proof}

It is equivalent to show that the induced morphism of character groups $X^{*}(T/Z_{G})\to X^{*}(S_{n})$ has non zero determinant. By a direct calculation, this determinant is
$$
\big(1+\sum_{i=1}^{d-1}s_{i}/r_{i}\big)\prod_{i=1}^{d-1}r_{i},
$$
which is non zero by the choice of $r_{i},\,s_{i}$.

\end{proof}

\subsection{The groups $\Sp_{2d}$ and $\SO_{2d}$}

Let $(k^{2d},\langle\,,\rangle)$ be the standard symplectic vector space over $k$ such that $\langle e_{i},e_{2d+1-i}\rangle=\delta_{i,j},\,i,j=1,\cdots,d$. Let $\Sp_{2d}$ be the symplectic group associated to it, let $T$ be the maximal torus of $\Sp_{2d}$ consisting of the diagonal matrices. Let $(F^{2d},\langle\,,\rangle)$ be the scalar extension of $(k^{2d},\langle\,,\rangle)$ to $F$. For a lattice $L$ in $F^{2d}$, let 
$$
L^{\vee}=\{x\in F^{2d}\mid \langle x,L\rangle \subset \co\}.
$$
The affine grassmannian associated to $\Sp_{2d}$ classifies the lattices $L$ in $F^{2d}$ such that $L=L^{\vee}$. Let 
$$
\xx_{n}=\{L\in \xx\mid \ep^{n}L_{0}\subset L\subset \ep^{-n}L_{0}\}.
$$
It is a $T$-invariant projective $k$-variety and we have $\xx=\lim_{n\to +\infty}\xx_{n}$. Let $\rho_{n}:\xx_{n}\to\gr_{2nd,4nd}$ be the injective $T$-equivariant morphism defined by 
$$
\rho_{n}(L)=L/\ep^{n}L_{0}\subset \ep^{-n}L_{0}/\ep^{n}L_{0}.
$$
Let $Y_{n}$ be its image, it is isomorphic to $\xx_{n}$. Let $\iota:\gr_{2nd,4nd}\to \bp^{N},\,N=\binom{4nd}{2nd}-1$, be the Pl\"ucker embedding. Let $\cl=(\iota\circ \rho_{n})^{*}\co_{\bp^{N}}(1)$, it is an ample $T$-equivariant line bundle on $Y_{n}$.

Let $\Gsp_{2d}$ be the reductive group over $k$ such that for any $k$-algebra $R$,
$$
\Gsp_{2d}(R)=\big\{g\in \gl_{2d}(R)\mid \langle gv, gv'\rangle =\lambda(g)\langle v, v'\rangle,\, \lambda(g)\in R^{\times},\,\forall\, v,\,v'\in R^{2d}\big\}.
$$
We have an exact sequence
$$
0\to \Sp_{2d} \to \Gsp_{2d}\xrightarrow{\lambda}\bbg_{m} \to 0,
$$
from which it follows that $\xx$ is the neutral connected component of $\xx^{\Gsp_{2d}}$. Let
$$
\widetilde{T}=\left\{\begin{bmatrix}tt_{1}&&&&&\\& \ddots&&&&\\ && tt_{d}&&&\\ &&&t_{d}^{-1}&&\\ &&&&\ddots &\\ &&&&& t_{1}^{-1}
\end{bmatrix};\; t,\,t_{i}\in k^{\times}
\right\}.
$$
It is a maximal torus of $\Gsp_{2d}$. Let $\bbg_{m}$ be the center of $\Gsp_{2d}$, then $\widetilde{T}/\bbg_{m}$ acts freely on $\xx$ and we have
$$
\xx^{\xi}/\widetilde{T}=\xx^{\xi}/T.
$$

Given a generic element $\xi=(\xi_{1},\cdots,\xi_{d},-\xi_{d},\cdots,-\xi_{1})\in \kt$ such that $\xi_{i}\in \bq$, we can find $r_{i},\,s_{i}\in \bz,\,i=1,\cdots,d$, such that
$$
\xi_{i}=nd\frac{s_{i}/r_{i}}{2+\sum_{i=1}^{d}s_{i}/r_{i}},\quad i=1,\cdots,d
$$
Consider the sub torus $S_{n}$ of $\widetilde{T}$ defined by
$$
S_{n}=\left\{\begin{bmatrix}t_{1}^{s_{1}}\cdots t_{d}^{s_{d}}t_{1}^{r_{1}}&&&&&\\& \ddots&&&&\\ && t_{1}^{s_{1}}\cdots t_{d}^{s_{d}}t_{d}^{r_{d}}&&&\\ &&&t_{d}^{-r_{d}}&&\\ &&&&\ddots &\\ &&&&& t_{1}^{-r_{1}}
\end{bmatrix};\; t_{i}\in k^{\times}
\right\}.
$$

\begin{lem}

The morphism $S_{n}\to \widetilde{T}/\bbg_{m}$ is an isogeny.

\end{lem}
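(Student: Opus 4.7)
The plan is to imitate the argument given for $\gl_d$. Since $\widetilde{T}$ has dimension $d+1$ and the central $\bbg_m$ has dimension $1$, while $S_n$ is parametrized by $(t_1,\ldots,t_d)$, both $S_n$ and $\widetilde{T}/\bbg_m$ are tori of dimension $d$. It therefore suffices to verify that the induced morphism of character lattices $X^{*}(\widetilde{T}/\bbg_m)\to X^{*}(S_n)$ has non-zero determinant, exactly as in the previous lemma.

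First I would describe $X^{*}(\widetilde{T}/\bbg_m)$ explicitly as a sublattice of $X^{*}(\widetilde{T})$. The center $\bbg_m\subset \Gsp_{2d}$ is the group of scalar matrices $\lambda\cdot I$, which in the parametrization of $\widetilde{T}$ corresponds to $(t,t_1,\ldots,t_d)=(\lambda^{2},\lambda^{-1},\ldots,\lambda^{-1})$; hence, writing the basic coordinate characters of $\widetilde{T}$ as $\chi_t,\chi_{t_1},\ldots,\chi_{t_d}$, a character $a\chi_t+\sum_{i=1}^{d}b_i\chi_{t_i}$ descends to $\widetilde{T}/\bbg_m$ iff $2a=\sum_{i=1}^{d}b_i$. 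Next I would compute the pullback along $S_n\hookrightarrow\widetilde{T}$: letting $\psi_i$ be the $i$-th coordinate character of $S_n$, the defining formulas for $S_n$ give $\chi_t\mapsto \sum_{i=1}^{d}s_i\psi_i$ (since the similitude factor of the displayed element is $\prod_j t_j^{s_j}$) and $\chi_{t_i}\mapsto r_i\psi_i$.

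I would then fix the $\bz$-basis of $X^{*}(\widetilde{T}/\bbg_m)$ given by
$$
v_1=\chi_t+2\chi_{t_1},\qquad v_k=\chi_{t_{k-1}}-\chi_{t_k}\quad(k=2,\ldots,d),
$$
(it is easy to check these do form a basis), and write the matrix of the composite in this basis. The first column is $(s_1+2r_1,s_2,\ldots,s_d)^{T}$, and for $k\geq 2$ column $k$ has $r_{k-1}$ in row $k-1$, $-r_k$ in row $k$, and zeros elsewhere. Expanding the determinant along the first column, the minor obtained by deleting row $i$ decomposes as a block product of two bidiagonal matrices, and collecting the $d$ contributions yields
$$
\det=\pm\Bigl(\prod_{i=1}^{d}r_i\Bigr)\Bigl(2+\sum_{i=1}^{d}\frac{s_i}{r_i}\Bigr),
$$
in perfect analogy with the formula obtained for $\gl_d$. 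This is nonzero by the very choice of $r_i,s_i$ made to represent the generic $\xi_i$, which concludes the proof that $S_n\to \widetilde{T}/\bbg_m$ is an isogeny.

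The only real labor is the bookkeeping in the determinant expansion; conceptually nothing beyond the $\gl_d$ argument is needed. The mild subtlety to be aware of is that one should choose the basis of $X^{*}(\widetilde{T}/\bbg_m)$ so that the matrix acquires the nearly bidiagonal shape above, rather than a more symmetric form that would make the cofactor expansion more cumbersome.
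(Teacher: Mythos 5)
Your proposal is correct and follows exactly the paper's route: the paper's proof simply states that the determinant of $X^{*}(\widetilde{T}/\bbg_{m})\to X^{*}(S_{n})$ equals $\bigl(2+\sum_{i=1}^{d}s_{i}/r_{i}\bigr)\prod_{i=1}^{d}r_{i}$, nonzero by the choice of $r_{i},s_{i}$. You merely supply the explicit basis and cofactor expansion that the paper leaves implicit, and your bookkeeping (center given by $(t,t_{i})=(\lambda^{2},\lambda^{-1})$, pullbacks $\chi_{t}\mapsto\sum_{j}s_{j}\psi_{j}$, $\chi_{t_{i}}\mapsto r_{i}\psi_{i}$) checks out.
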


\begin{proof}

As before, we need to calculate the determinant of the morphism of character groups $X^{*}(\widetilde{T}/\bbg_{m})\to X^{*}(S_{n})$, it is 
$$
\big(2+\sum_{i=1}^{d}s_{i}/r_{i} \big)\prod_{i=1}^{d}r_{i},
$$
which is non zero by the definition of $r_{i},\,s_{i}$.
\end{proof}

As we have done for $\gl_{d}$, we can calculate the $S_{n}$-(semi)-stable points $Y_{n}^{st}$ (resp. $Y_{n}^{ss}$) on $Y_{n}$ with respect to the polarization given by the line bundle $\cl$, and obtain the following comparison result, which implies the proposition \ref{mainquotient}.

\begin{prop}

Under the above setting, a lattice $L\in \xx_{n}$ is $\xi$-stable if and only if $\rho_{n}(L)\in Y_{n}^{ss}=Y_{n}^{st}$. As a consequence, $\xx_{n}^{\xi}$ is an open subvariety of $\xx_{n}$ and the quotient $\xx_{n}^{\xi}/T=\xx_{n}^{\xi}/(\widetilde{T}/\bbg_{m})\cong Y_{n}^{ss}/\!/S_{n}$ is a projective $k$-variety.   

\end{prop}

For the group $\SO_{2d}$, the strategy is totally the same, unless we need to use the standard quadratic space $(k^{2d},\langle\,, \rangle)$ over $k$ such that $\langle e_{i},e_{2d+1-i}\rangle=\delta_{i,j},\,i,j=1,\cdots,2d$, instead of the standard symplectic vector space.

\subsection{The group $\SO_{2d+1}$}

Let $(k^{2d+1},\langle\,,\rangle)$ be the standard quadratic space over $k$ such that $\langle e_{i},e_{2d+2-i}\rangle=\delta_{i,j},\,i,j=1,\cdots,2d+1$. Let $\SO_{2d+1}$ be the orthogonal group associated to it, and let $T$ be the maximal torus of $\SO_{2d+1}$ consisting of the diagonal matrices. Let $(F^{2d+1},\langle\,,\rangle)$ be the scalar extension of $(k^{2d+1},\langle\,,\rangle)$ to $F$. For a lattice $L$ in $F^{2d+1}$, let 
$$
L^{\vee}=\{x\in F^{2d+1}\mid \langle x,L\rangle \subset \co\}.
$$
The affine grassmannian associated to $\SO_{2d+1}$ classifies the lattices $L$ in $F^{2d+1}$ such that $L=L^{\vee}$. Let 
$$
\xx_{n}=\{L\in \xx\mid \ep^{n}L_{0}\subset L\subset \ep^{-n}L_{0}\}.
$$
It is a $T$-invariant projective $k$-variety and 
$$
\xx=\lim_{n\to +\infty}\xx_{n}.
$$

Let $\rho_{n}:\xx_{n}\to\gr_{n(2d+1),2n(2d+1)}$ be the injective $T$-equivariant morphism defined by 
$$
\rho_{n}(L)=L/\ep^{n}L_{0}\subset \ep^{-n}L_{0}/\ep^{n}L_{0}.
$$
Let $Y_{n}$ be its image, it is isomorphic to $\xx_{n}$. Let $\cl$ again be the $T$-equivariant line bundle on $Y_{n}$ induced by the Pl\"ucker embedding of $\gr_{n(2d+1),2n(2d+1)}$.

Let $\GO_{2d+1}$ be the reductive group over $k$ such that for any $k$-algebra $R$,
$$
\GO_{2d+1}(R)=\big\{g\in \gl_{2d+1}(R)\mid \langle gv, gv'\rangle =\lambda(g)\langle v, v'\rangle,\, \lambda(g)\in R^{\times},\,\forall\, v,\,v'\in R^{2d+1}\big\}.
$$
We have an exact sequence
$$
0\to \SO_{2d+1} \to \GO_{2d+1}\xrightarrow{\lambda}\bbg_{m} \to 0,
$$
from which it follows that $\xx$ is the neutral connected component of $\xx^{\GO_{2d+1}}$. Let
$$
\widetilde{T}=\left\{\begin{bmatrix}t^{2}t_{1}&&&&&&\\& \ddots&&&&&\\ && t^{2}t_{d}&&&&\\ &&&t&&&\\  &&&&t_{d}^{-1}&&\\ &&&&&\ddots &\\ &&&&&& t_{1}^{-1}
\end{bmatrix};\; t,\,t_{i}\in k^{\times}
\right\}.
$$
It is a maximal torus of $\GO_{2d+1}$. Let $\bbg_{m}$ be the center of $\GO_{2d+1}$, then $\widetilde{T}/\bbg_{m}$ acts freely on $\xx$ and we have
$$
\xx^{\xi}/\widetilde{T}=\xx^{\xi}/T.
$$

Given a generic element $\xi=(\xi_{1},\cdots,\xi_{d},0,-\xi_{d},\cdots,-\xi_{1})\in \kt$ such that $\xi_{i}\in \bq$, we can find $r_{i},\,s_{i}\in \bz,\,i=1,\cdots,d$, such that
$$
\xi_{i}=n\big(d+\frac{1}{2}\big)\frac{s_{i}/r_{i}}{1+\sum_{i=1}^{d}s_{i}/r_{i}},\quad i=1,\cdots,d
$$
Consider the sub torus $S_{n}$ of $\widetilde{T}$ defined by
$$
S_{n}=\left\{\begin{bmatrix}t_{1}^{2s_{1}}\cdots t_{d}^{2s_{d}}t_{1}^{r_{1}}&&&&&&\\& \ddots&&&&&\\ && t_{1}^{2s_{1}}\cdots t_{d}^{2s_{d}}t_{d}^{r_{d}}&&&&\\ &&& t_{1}^{s_{1}}\cdots t_{d}^{s_{d}}&&& \\ &&&&t_{d}^{-r_{d}}&&\\ &&&&&\ddots &\\ &&&&&& t_{1}^{-r_{1}}
\end{bmatrix};\; t_{i}\in k^{\times}
\right\}.
$$

\begin{lem}

The morphism $S_{n}\to \widetilde{T}/\bbg_{m}$ is an isogeny.
\end{lem}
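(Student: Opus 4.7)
The plan is to imitate the two preceding determinant calculations. Since $\widetilde{T}$ has rank $d+1$ and the center $\bbg_m$ has rank $1$, both $S_n$ and $\widetilde{T}/\bbg_m$ are tori of dimension $d$; the morphism between them is therefore an isogeny if and only if the induced map $X^*(\widetilde{T}/\bbg_m) \to X^*(S_n)$ on character lattices has nonzero determinant. I will compute this determinant explicitly and show it equals $\bigl(1 + \sum_{i=1}^{d} s_i/r_i\bigr)\prod_{i=1}^{d} r_i$, which is nonzero by the choice of $r_i, s_i$.

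Parametrize $\widetilde{T}$ by $(t, t_1, \ldots, t_d)$, with dual basis $(\chi, \chi_1, \ldots, \chi_d)$ of $X^*(\widetilde{T})$. The center $\bbg_m \subset \GO_{2d+1}$ consists of scalar matrices $cI$; inside $\widetilde{T}$ this forces $t = c$ and $t_i = c^{-1}$, so the pullback $X^*(\widetilde{T}) \to X^*(\bbg_m) = \bz$ sends $\chi \mapsto 1$ and each $\chi_i \mapsto -1$. Consequently $X^*(\widetilde{T}/\bbg_m)$ is the rank-$d$ sublattice $\{a_0\chi + \sum_i a_i \chi_i : a_0 = a_1 + \cdots + a_d\}$, for which $(\chi + \chi_i)_{i=1}^{d}$ is a convenient $\bz$-basis. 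Reading the inclusion $S_n \hookrightarrow \widetilde{T}$ off the displayed diagonal, the middle entry gives $\chi \mapsto \sum_{j} s_j \eta_j$ and the lower-right $d$ entries give $\chi_i \mapsto r_i \eta_i$, where $\eta_1, \ldots, \eta_d$ denote the standard characters of $S_n = \bbg_m^d$; the upper-left $d$ entries $t^2 t_i$ then match automatically against $t_1^{2s_1}\cdots t_d^{2s_d} t_i^{r_i}$, which is a useful consistency check on the exponent $2$ appearing in $\widetilde{T}$.

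Putting this together, the composite map sends the basis vector $\chi + \chi_i$ to $r_i \eta_i + \sum_j s_j \eta_j$, so its matrix in the bases $(\chi + \chi_i)_i$ and $(\eta_j)_j$ is $D + \mathbf{1}\mathbf{s}^{T}$, with $D = \diag(r_1, \ldots, r_d)$ and $\mathbf{s} = (s_1, \ldots, s_d)^{T}$. The matrix determinant lemma immediately yields the claimed value $\prod_i r_i \cdot (1 + \sum_i s_i/r_i)$, and both factors are nonzero: the second is exactly the denominator appearing in the defining formula of $\xi_i$ and is therefore nonzero by assumption on $\xi$, while the $r_i$ are nonzero by construction. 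The only delicate bookkeeping is the correct identification of the center of $\GO_{2d+1}$ inside $\widetilde{T}$ in terms of the chosen coordinates, which depends on the extra parameter $t$ sitting in the middle entry and the exponent $2$ on $t^2 t_i$; once that is set up, the computation is essentially identical to, and only slightly richer than, the analogous determinant just computed for $\Sp_{2d}$, so I do not anticipate any serious obstacle.
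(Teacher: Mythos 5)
Your proposal is correct and follows exactly the paper's route: the paper likewise reduces the isogeny claim to the nonvanishing of the determinant of $X^{*}(\widetilde{T}/\bbg_{m})\to X^{*}(S_{n})$ and states its value as $\bigl(1+\sum_{i=1}^{d}s_{i}/r_{i}\bigr)\prod_{i=1}^{d}r_{i}$, which is nonzero by the choice of $r_{i},s_{i}$. You simply carry out in detail the ``direct calculation'' the paper leaves implicit (your matrix is the transpose of the natural one, which is harmless), so there is nothing to add.
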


\begin{proof}

As before, we need to calculate the determinant of the morphism of character groups $X^{*}(\widetilde{T}/\bbg_{m})\to X^{*}(S_{n})$, it is 
$$
\big(1+\sum_{i=1}^{d}s_{i}/r_{i} \big)\prod_{i=1}^{d}r_{i},
$$
which is non zero by the definition of $r_{i},\,s_{i}$.
\end{proof}

As before, we can calculate the $S_{n}$-(semi)-stable points $Y_{n}^{st}$ (resp. $Y_{n}^{ss}$) on $Y_{n}$ with respect to the polarization given by the line bundle $\cl$, and obtain the following comparison result. It implies the proposition \ref{mainquotient}.

\begin{prop}

Under the above setting, a lattice $L\in \xx_{n}$ is $\xi$-stable if and only if $\rho_{n}(L)\in Y_{n}^{ss}=Y_{n}^{st}$. As a consequence, $\xx_{n}^{\xi}$ is an open subvariety of $\xx_{n}$ and the quotient $\xx_{n}^{\xi}/T=\xx_{n}^{\xi}/(\widetilde{T}/\bbg_{m})\cong Y_{n}^{ss}/\!/S_{n}$ is a projective $k$-variety.   

\end{prop}

\section{Arthur-Kottwitz reduction}

Similarly to the Harder-Narasimhan reduction of non-semi-stable vector bundles on a smooth projective algebraic curve, we can stratify $\xx^{\text{n-}\xi}$ such that each strata is an infinite-dimensional homogeneous affine fibration on the various $\xx^{M,\xi}$ for $M\in \cl(T)$. We call this process the Arthur-Kottwitz reduction.

For $P\in \cf(T)$, let $P=MN$ be the standard Levi factorization. Let $\Phi_P(G,M)$ be the image of $\Phi(N,\,T)$ in $(\ka_M^G)^{*}$. For any point $a\in \ka_M^G$, we define a cone in $\ka_M^G$,
$$
D_P(a)=\left\{y\in\ka_M^G\,|\, \alpha(y-a)\geq 0,\,\forall \alpha\in \Phi_P(G,M)\right\}.
$$

\begin{defn}

For any geometric point $x\in \xx$, we define a semi-cylinder $C_P(x)$ in $\ka_T^G$ by
$$
C_P(x)=\pi^{M,-1}(\ec^{M}(f_{P}(x)))\cap \pi_{M}^{-1}(D_P(H_{P}(x))).
$$
\end{defn}

By definition, we get a partition 
$$
\ka_T^{G}=\ec(x)\cup\bigcup_{P\in \cf(T)}C_P(x).
$$ 
such that the interior of any two members doesn't intersect. The  figure \ref{polyred} gives an idea of this partition for $\gl_{3}$.

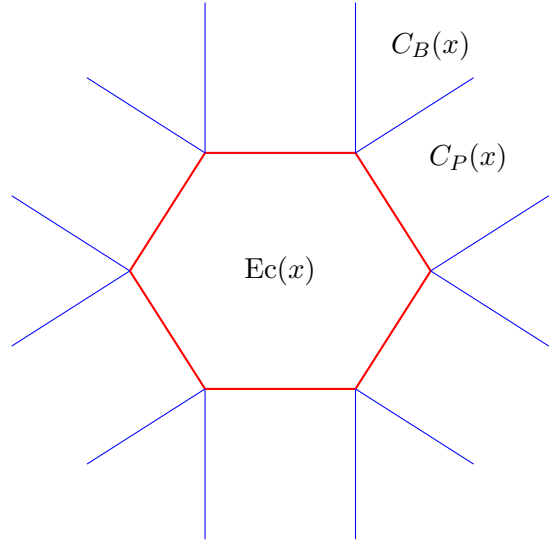
\begin{figure}[h]
\begin{center}

\begin{tikzpicture}

\draw[red, thick] (-1,1.57)--(1,1.57);
\draw[red, thick] (-1,1.57)--(-2,0);
\draw[red, thick] (-2,0)--(-1,-1.57);
\draw[red, thick] (-1,-1.57)--(1,-1.57);
\draw[red, thick] (1,-1.57)--(2,0);
\draw[red, thick] (2,0)--(1,1.57);

\draw[blue] (1,1.57)--(1,3.57);
\draw[blue] (1,1.57)--(2.57,2.57);
\draw[blue] (2,0)--(3.57,1);
\draw[blue] (2,0)--(3.57,-1);
\draw[blue] (1,-1.57)--(1,-3.57);
\draw[blue] (1,-1.57)--(2.57,-2.57);

\draw[blue] (-1,1.57)--(-1,3.57);
\draw[blue] (-1,1.57)--(-2.57,2.57);
\draw[blue] (-2,0)--(-3.57,1);
\draw[blue] (-2,0)--(-3.57,-1);
\draw[blue] (-1,-1.57)--(-1,-3.57);
\draw[blue] (-1,-1.57)--(-2.57,-2.57);

\draw (0,0) node{$\ec(x)$};
\draw (2, 3) node{$C_{B}(x)$};
\draw (2.5,1.5) node{$C_{P}(x)$};

\end{tikzpicture}
\caption{$\ec(x)$ and $C_{P}(x)$ for $\gl_{3}$.}
\label{polyred}
\end{center}
\end{figure}

So for any $x\notin \xxs$, there exists a unique parabolic subgroup  $P\in \cf(T)$ such that $\xi\in C_P(x)$ since $\xi$ is generic. In this case, $f_P(x)\in \xx^M$ is $\xi^{M}$-stable, where $\xi^{M}=\pi^{M}(\xi)\in \ka_T^M$. Let
$$
S_P=\{x\in \xx| \,\xi\in C_{P}(x)\},
$$
we have the decomposition of the affine grassmannian
\begin{equation}\label{decomp1}
\xx=\xxs\sqcup \bigsqcup_{P\in \cf(T),\,P\neq G}S_P.
\end{equation}

For $P\in \cp(M)$, let $P^{-}$ be the parabolic subgroup opposite to $P$ with respect to $M$. Let $\Lambda_{M,P}=D_{P^{-}}(\xi_{M})\cap \Lambda_{M}$, where $\xi_{M}=\pi_{M}(\xi)\in \ka_{M}^{G}$. We have the disjoint partition
$$
\Lambda_M=\bigsqcup_{P\in \cp(M)}\Lambda_{M,P}.
$$
For $\lambda\in \Lambda_{M}$, let
$
\xx^{M,\lambda,\xi^M}=\xx^{M,\xi^M}\cap \xx^{M,\lambda}.
$
By definition, we have $$f_{P}(S_{P})=\bigsqcup_{\lambda\in \Lambda_{M,P}}\xx^{M,\lambda, \xi_{M}}.$$ Recall that $\xx^{M}$ is embedded naturally in $\xx$ via the inclusion $M(F)\to G(F)$. Hence, $\xx^{M,\lambda,\xi^{M}}$ is embedded naturally in $S_{P}$.

\begin{prop}\label{red2}

The group action of $N(F)$ on $\xx$ by left translation preserves the subset $S_{P}$, from which we get the decomposition  
$$
S_{P}=\bigsqcup_{\lambda\in \Lambda_{M,P}} N(F)\xx^{M,\lambda,\xi^{M}}.
$$
Under the retraction $f_{P}$, each orbit $N(F)\xx^{M,\lambda,\xi^{M}}$ becomes an infinite dimensional homogeneous affine fibration on $\xx^{M,\lambda,\xi^{M}}$. Furthermore, each orbit is locally closed in $\xx$.

\end{prop}

\begin{proof}

Since $f_P(ux)=f_P(x)$ for any $x\in  S_P$ and any $u \in N(F)$, the group $N(F)$ preserves the subset $S_{P}$. By Iwasawa decomposition, each $N(F)$-orbit on $\xx$ passes through a unique point on $\xx^{M}$. When restricted to $S_{P}$, this point lies in $\bigsqcup_{\lambda\in \Lambda_{M,P}} \xx^{M,\lambda,\xi^{M}}$, from which follows the decomposition. The remaining assertions follow from proposition \ref{fibrationp} and the fact that $\xx^{M,\lambda,\xi^{M}}$ is open in $\xx^{M,\lambda}$.

\end{proof}

We summarise the above discussion into the following theorem.

\begin{thm}\label{red1}

The affine grassmannian can be decomposed into locally closed sub-ind-$k$-schemes
$$
\xx=\xx^{\xi}\sqcup \bigsqcup_{\stackrel{P\in \cf(T)}{P\neq G}}\bigsqcup_{\lambda\in \Lambda_{M_{P},P}}N_{P}(F)\xx^{M_{P},\lambda,\xi^{M_{P}}},
$$
where $P=M_{P}N_{P}$ is the standard Levi decomposition of $P$. Each strata $N_{P}(F)\xx^{M_{P},\lambda,\xi^{M_{P}}}$ is an infinite dimensional homogeneous affine fibration over $\xx^{M_{P},\lambda,\xi^{M_{P}}}$.

\end{thm}

We call the above decomposition the \emph{Arthur-Kottwitz reduction}. We want to point out that each strata $N_{P}(F)\xx^{M_{P},\lambda,\xi^{M_{P}}}$ is not only of infinite dimension, but also of infinite codimension in the affine grassmannian $\xx$.

\section{Poincaré series of $\xx^{\xi}/T$ for $\SL_{d}$}

Let $k=\overline{\mathbf{F}}_{p}$, let $l$ be a prime number different from $p$. Let $G=\SL_{d}$, let $T$ be the maximal torus of $G$ consisting of the diagonal matrices, let $B$ be the Borel subgroup of $G$ consisting of the upper triangular matrices. Let $X_{*}^{+}(T)$ be the cone of dominant cocharacters $\mu$ of $T$ with respect to $B$.

\subsection{Poincaré series of the affine grassmannian}

Let $V$ be a separated scheme of finite type over $k$, we use the notation:
$$
H_{i}(V)=(H_{\mathrm{et}}^{i}(V,\overline{\bq}_{l}))^{*},\quad H_{i,c}(V)=(H^{i}_{\mathrm{et},c}(V,\overline{\bq}_{l}))^{*}. 
$$
Its Poincaré polynomial is defined to be
$$
P_{V}(t)=\sum_{i=0}^{2\dim(V)}\dim(H_i(V))t^{i}.
$$
For an ind-$k$-scheme $\cvv=\injlim V_{n}$, we define
$$
H_{i}(\cvv)=\lim_{n\to\infty}H_{i}(V_{n}),\quad H_{i,c}(\cvv)=\lim_{n\to\infty}H_{i,c}(V_{n}).
$$
If $\dim(H_{i}(\cvv))<+\infty$ for all $i\in \bn$, we define the Poincaré series of $\cvv$ to be
$$
P_{\cvv}(t)=\sum_{i=0}^{\infty}\dim(H_i(\cvv))t^{i}.
$$

Let $I$ be the standard Iwahori subgroup of $G(F)$, i.e. it is the inverse image of $B$ under the reduction morphism $G(\co)\to G(k)$. Recall that we have the Bruhat-Tits decomposition of the affine grassmannian in affine spaces,  
$$
\xx=\bigsqcup_{\lambda\in X_{*}(T)}I\ep^{\lambda}K/K,
$$
and $I\ep^{\lambda}K/K\subset \overline{I\ep^{\mu}K/K}$ if and only if $\lambda\prec \mu$, which means the following: For $\lambda,\mu\in X_{*}^{+}(T)$, then $\lambda\prec \mu$ if and only if
\begin{eqnarray*}
\lambda_{1}+\cdots+\lambda_{i}&\leq& \mu_{1}+\cdots+\mu_{i},\quad i=1,\cdots,d.
\end{eqnarray*}
Then we impose $W\lambda\prec W\mu$. For all $g,g'\in W/W_{\lambda}$, where $W_{\lambda}$ is the stabilizer of $\lambda$, we impose that $g\lambda\prec g'\lambda$ if and only if $g'\prec_{B}g$, where $\prec_{B}$ is the order on $W/W_{\lambda}$ induced by the Bruhat-Tits order on $W$ with respect to $B$.

For $\mu\in X_{*}(T)$, let $\sch(\mu)=\overline{I\ep^{\mu} K/K}$. Then we have
$$
\sch(\mu)=\bigcup_{\substack{\lambda\in X_{*}(T)\\ \lambda\prec \mu}}I\ep^{\lambda}K/K.
$$

Let $\nu=((d-1)n, -n,\cdots, -n)\in X_{*}^{+}(T)$, it is easy to see that $\xx_{n}=\overline{I\ep^{\nu}K/K}$. Using the Bruhat-Tits decomposition, one can prove that

\begin{prop}[Bott]

The Poincaré series of the affine grassmannian is 
$$
P_{\xx}(t)=\prod_{i=1}^{d-1}(1-t^{2i})^{-1}.
$$
\end{prop}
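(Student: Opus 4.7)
The plan is to use the Iwahori cellular decomposition of the affine grassmannian. Let $I\subset K$ be the Iwahori subgroup attached to $B$. The Iwahori--Bruhat decomposition gives a stratification
$$
\xx=\bigsqcup_{w\in (\widetilde{W}/W)_{\min}}I\dot{w} K/K,
$$
where each stratum $I\dot{w} K/K$ is isomorphic to an affine space $k^{\ell(w)}$, with $\ell(w)$ the length of the minimal length representative of the coset in $\widetilde{W}$. Since each affine cell contributes to $\ell$-adic cohomology only in degree $2\ell(w)$, all odd Betti numbers vanish and
$$
P_{\xx}(t)=\sum_{w\in (\widetilde{W}/W)_{\min}}t^{2\ell(w)}.
$$

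The problem is thereby reduced to the combinatorial evaluation of a length generating function on an affine Weyl group. I would use the standard factorization
$$
P_{\widetilde{W}}(t)=P_W(t)\cdot\sum_{w\in (\widetilde{W}/W)_{\min}}t^{\ell(w)},
$$
which follows from the additivity $\ell(wu)=\ell(w)+\ell(u)$ whenever $w$ is a minimal coset representative and $u\in W$. For $G=\SL_d$ one has $W=\ks_d$, and the Macdonald--Bott product formula for the Poincaré series of affine Weyl groups of type $A_{d-1}^{(1)}$ gives $P_{\widetilde{W}}(t)=P_W(t)\cdot\prod_{i=1}^{d-1}(1-t^i)^{-1}$. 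Dividing by $P_W(t)$ and substituting $t\mapsto t^{2}$ then yields the claimed formula.

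The main obstacle is of course the Macdonald--Bott product formula itself. A self-contained alternative specific to type $A$ would be to identify $(\widetilde{W}/W)_{\min}$ with the cone of antidominant coweights of $T$ via $w\leftrightarrow t_\lambda$, compute the length $\ell(t_\lambda)$ explicitly in terms of the coordinates of $\lambda$, and recognize the resulting sum over antidominant integer sequences $\lambda_1\leq\cdots\leq\lambda_d$ with $\sum\lambda_i=0$ as the generating function $\prod_{i=1}^{d-1}(1-t^{2i})^{-1}$, via the classical bijection between such sequences and partitions of length at most $d-1$.
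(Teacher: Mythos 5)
The paper gives no proof of this proposition at all: it simply refers to Bott's topological proof and to the combinatorial proof of Iwahori--Matsumoto. Your main route is exactly the latter combinatorial argument: the Iwahori--Bruhat paving of $\xx$ by affine cells $I\dot{w}K/K\cong k^{\ell(w)}$ indexed by minimal length coset representatives of $\widetilde{W}/W$ (legitimate here since for $\SL_{d}$ the extended affine Weyl group coincides with the affine Coxeter group and $\xx$ is connected), vanishing of odd cohomology (you should say explicitly that cell closures are unions of cells, so the paving is filtrable and the cell classes give a basis), and the evaluation $\sum_{w}t^{\ell(w)}=P_{\widetilde{W}}(t)/P_{W}(t)=\prod_{i=1}^{d-1}(1-t^{i})^{-1}$ via the Bott--Macdonald formula, followed by $t\mapsto t^{2}$. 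This is a correct sketch, with the caveat you acknowledge yourself: the Bott--Macdonald formula carries essentially all the content, so this route, like the paper, is ultimately a reduction to the literature.

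Your proposed self-contained alternative, however, has a genuine gap: the minimal length representatives of $\widetilde{W}/W$ are in bijection with \emph{all} of $X_{*}(T)$ (each coset is $t_{\lambda}W$ for a unique $\lambda$), not with the antidominant cone. The antidominant $\lambda$ are precisely those for which the translation $t_{\lambda}$ is itself the minimal representative of its coset; for the other elements of the $W$-orbit of $\lambda$ the corresponding cell $I\ep^{\lambda}K/K$ has dimension strictly smaller than $\ell(t_{\lambda})$. Summing $t^{2\ell(t_{\lambda})}$ over antidominant $\lambda$ only counts the largest Iwahori cell in each $K$-orbit and gives the wrong answer: already for $\SL_{2}$ it produces $\sum_{k\geq 0}t^{4k}=(1-t^{4})^{-1}$ instead of $(1-t^{2})^{-1}$. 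To repair this route you must sum over all $\lambda\in X_{*}(T)$ with exponent equal to twice the length of the minimal representative $w_{\lambda}$ of $t_{\lambda}W$, i.e.\ twice $\dim(I\ep^{\lambda}K/K)$, which equals $\ell(t_{\lambda})$ only in the antidominant case (for regular dominant $\lambda$ it is $\ell(t_{\lambda})-\ell(w_{0})$, and in general it is $\sum_{\alpha>0}|\langle\alpha,\lambda\rangle|$ corrected by the number of positive roots on which $\lambda$ has the ``wrong'' sign); the combinatorial identification with partitions having at most $d-1$ parts must then be made for this corrected statistic, as the $\SL_{2}$ case (lengths $0,1,2,3,\dots$, each occurring once) already indicates.
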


The reader can find a topological proof in \cite{bott}, and a combinatorial proof in \cite{iwahori}.

\subsection{$\xx^{\xi}/T$ is homologically smooth}

Let $V$ be an irreducible separated scheme of finite type over $k$, recall that $V$ is called \emph{homologically smooth} if it satisfies $\mathrm{IC_{V}^{\bullet}}=\overline{\bq}_{l,V}$, where $\mathrm{IC_{V}^{\bullet}}=j_{!*}\overline{\bq}_{l, U}$ for any dense open smooth subvariety $U\xrightarrow{j} V$.

\begin{lem}\label{partiepure}

For $n\in \bn$, the algebraic variety $\xx_{n}$ is homologically smooth.

\end{lem}

\begin{proof}

Let $\mu\in X_{*}(T), \mu\prec \nu$. Let $M_{\nu}$ be the irreducible representation of $\pgl_{d}$ with highest weight $\nu$, it can also be seen as an irreducible representation of $\gl_{d}$. Let $n_{\mu}$ be the dimension of the weight space of weight $\mu$ in $M_{\nu}$. It can be calculated to be $1$ by the traditional Young tableaux.

By the geometric Satake isomorphism \cite{mv}, the dimension of the stalk of $\mathrm{IC_{\xx_{n}}^{\bullet}}$ at $\ep^{\mu}$ is the same as $n_{\mu}=1$, since $\pgl_{d}$ is the Langlands dual group of $\SL_{d}$. So $\mathrm{IC_{\xx_{n}}^{\bullet}}=\overline{\bq}_{l,\xx_{n}}$, as claimed.

\end{proof}

\begin{cor}\label{stablepure}

For any $n\in \bn$, the algebraic variety $\xx_{n}^{\xi}/T$ is homologically smooth. In particular, it satisfies the Poincaré duality.
\end{cor}

\begin{proof}

Since $\xxs_{n}$ is open in $\xx_{n}$, $\xxs_{n}$ is homologically smooth. It is a $T/\bbg_{m}$-torsor over $\xxs_{n}/T$ since the action of $T/\bbg_{m}$ is free. In particular, the natural projection $\xxs_{n}\to \xxs_{n}/T$ is a surjective smooth morphism. According to \cite{gr}, corollary 17.16.3, locally it admits an étale section. So the torsor is locally trivial for the étale topology. This implies that $\xxs_{n}/T$ is homologically smooth since $\xxs_{n}$ is.
\end{proof}

\subsection{Calculation of the Poincaré series}

\begin{lem}\label{dim1}

Let $\xi\in \ka_{T}^{G}$ be such that $0<\alpha(\xi)<1,\,\forall \alpha\in \Phi_{B}(G,T)$. We have $\dim(\xx_{n})=nd(d-1)$, and the closed sub-variety $\xx_{n}\backslash \xxs_{n}$ have dimension at most $n(d-1)^{2}$, i.e. its codimension is at least $(d-1)n$ in $\xx_{n}$.
\end{lem}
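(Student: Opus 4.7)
The plan is to prove the two claims separately.

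For $\dim \xx_n = nd(d-1)$: the variety $\xx_n$ coincides with the Schubert variety $\sch(\nu)$ for the dominant cocharacter $\nu = (n, n, \ldots, n, -(d-1)n)$, since a lattice $L$ of index zero contains $\ep^n L_0$ exactly when all its elementary divisors relative to $L_0$ are $\leq n$, and the largest such dominant cocharacter is $\nu$. The standard formula $\dim \sch(\nu) = \langle \nu, 2\rho \rangle$, with $2\rho = (d-1, d-3, \ldots, -(d-1))$ the sum of the positive roots of $\SL_d$, then yields $nd(d-1)$ by a direct computation.

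For the codimension bound, I apply the Arthur-Kottwitz stratification from Section 2. By Lemma \ref{red1} and Proposition \ref{red2},
\[
\xx_n \setminus \xxs_n = \bigsqcup_{P \in \cf(T),\, P \neq G}(S_P \cap \xx_n), \qquad S_P = N(F) \cdot \xx^{M, \xi^M}_P,
\]
so it suffices to prove $\dim(S_P \cap \xx_n) \leq n(d-1)^2$ for every proper parabolic $P = MN$. I would stratify $S_P \cap \xx_n$ by the $M$-index $\lambda \in \Lambda_{M,P}$ of $f_P(y)$; only finitely many such $\lambda$ contribute since $y \in \xx_n$. Each stratum $N(F) \cdot \xx^{M, (\lambda), \xi^M} \cap \xx_n$ has dimension bounded by a base contribution $\dim(\xx^{M, (\lambda), \xi^M} \cap \xx^M_n)$, to be handled inductively on the semisimple rank via the analogous statement on $\xx^M$ (which factors as a product of affine grassmannians of smaller $\gl_{d_i}$), plus a fiber contribution $\dim(N(F) \cdot x \cap \xx_n)$ for a chosen representative $x = \ep^\mu$ with $\mu$ a lift of $\lambda$ to $X_*(T)$. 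The fiber contribution is a Mirkovi\'c-Vilonen-type intersection number, controlled by $\langle \mu + \nu, \rho_N \rangle$ with $\rho_N = \tfrac{1}{2}\sum_{\alpha \in \Phi(N,T)}\alpha$.

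Summing over the finite polytope of relevant $\lambda$ and combining the two contributions should yield the bound $\dim(S_P \cap \xx_n) \leq n(d-1)^2$. Here the hypothesis $0 < \alpha(\xi) < 1$ for $\alpha \in \Phi_B(G,T)$ is crucial: it confines the cone $\Lambda_{M,P}$ to a region pinched close to the origin, providing the additional codimension $(d-1)n$ compared to the naive bound. The main obstacle will be the uniformity in $P$ of the dimension count: for parabolics with a large Levi the base contribution dominates while the fiber is small, whereas for $P$ close to the Borel $B$ the roles reverse. The hypothesis on $\xi$ is precisely what balances these two extremes across all proper $P$, and verifying that the combined estimate gives codimension at least $(d-1)n$ uniformly will require careful bookkeeping with the polytopes $\Lambda_{M,P}$ and the root-theoretic dimension formulas.
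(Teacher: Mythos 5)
Your first claim is fine and is essentially the paper's own argument: $\xx_{n}=\sch(\nu)$ for $\nu=(n,\dots,n,-(d-1)n)$ and $\dim\sch(\nu)=\langle\nu,2\rho\rangle=nd(d-1)$. The problem is the second claim, where your argument stops exactly where the lemma begins. After reducing to bounding $\dim(S_{P}\cap\xx_{n})$ for each proper $P$ (which is also how the paper starts), you never produce the bound $n(d-1)^{2}$: the ``base plus fiber'' estimate is only described qualitatively, the optimization over $\lambda\in\Lambda_{M,P}$ is not carried out, and you yourself flag the uniformity in $P$ and the ``careful bookkeeping'' as unresolved. Since the entire content of the lemma is this quantitative estimate, what you have is a plan, not a proof. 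In addition, the fiber bound you invoke needs justification: a Mirkovi\'c--Vilonen-type formula of the shape $\langle\mu+\nu,\rho_{N}\rangle$ controls $N(F)\ep^{\mu}K/K\cap\sch(\nu)$, i.e.\ the fiber of $f_{P}$ over the torus-fixed point $\ep^{\mu}$, whereas you need the fibers over arbitrary points of $\xx^{M,(\lambda),\xi^{M}}$, and passing from the fixed point to a general point of the stratum requires an argument or a reference you do not supply; likewise the inductive bound on the ``base'' would need a precisely stated induction hypothesis for $(\xx^{M},\xi^{M})$, since $\xi^{M}$ is not subject to the same normalization $0<\alpha(\xi)<1$ without comment.

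For comparison, the paper sidesteps both difficulties: it reduces by an easy induction to $P$ a standard maximal parabolic of type $(r,d-r)$, observes that the hypothesis on $\xi$ forces $\lambda\le 0$ for $(\lambda,-\lambda)\in\Lambda_{M,P}$, and then bounds the whole slice $X^{\lambda}=H_{P}^{-1}((\lambda,-\lambda))\cap\xx_{n}$ at once using the explicit affine paving of $\xx_{n}$ by the cells $C(\mu)=\xx_{n}\cap B\ep^{\mu}K/K$, each of dimension $\sum_{i=2}^{d}(i-1)(n-\mu_{i})$; since $B\subset P$, $X^{\lambda}$ is a union of such cells, and maximizing this linear expression under $\mu_{i}\le n$, $\sum_{i=1}^{r}\mu_{i}=\lambda\le 0$ gives $\dim X^{\lambda}\le (d-1)^{2}n$ directly, hence the codimension $(d-1)n$. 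If you want to salvage your route, you would have to prove a parabolic analogue of this cell count (or a semi-infinite orbit estimate for the $f_{P}$-fibers inside $\sch(\nu)$ valid over every point of the stratum) and then actually perform the summation and optimization over $\lambda$; as written, the decisive inequality $\dim(S_{P}\cap\xx_{n})\le n(d-1)^{2}$ is asserted rather than proved.
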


\begin{proof}

Since $\xx_{n}=\overline{I\ep^{\nu} K/K}$, we have $\dim(\xx_{n})=\dim(I\ep^{\nu} K/K)=nd(d-1)$. 
The dimension of the closed sub-variety $\xx_{n}\backslash \xxs_{n}$ is
$$
\max\{\dim(S_{P}\cap \xx_{n}),\,P\in \cf(T),\,P\neq G\}.
$$

An easy induction reduces the situation to the case where $P$ is a standard maximal parabolic subgroup, here standard means that $B\subset P$. Suppose that $P$ is of type $(r,d-r)$, $1 \leq r \leq d-1$, i.e. its Levi factor is $M=\mathrm{S}(\aut(k^{r})\times \aut(k^{d-r}))$.

We identify $\Lambda_{M}$ with $\{(\lambda,-\lambda)\mid \lambda\in \bz\}$. For $(\lambda,-\lambda)\in \Lambda_{M,P}$, we have $\lambda\leq 0$. Let $X^{\lambda}=H_{P}^{-1}((\lambda,-\lambda))\cap \xx_{n}$, then the $S_{P}^{\lambda}:=S_{P}\cap X^{\lambda}$ are the connected components of $S_{P}\cap \xx_{n}$. So it is enough to bound the dimension of $X^{\lambda}$.

\begin{lem}

We have the affine paving
$$
\xx_{n}=\bigcup_{\ep^{\mu}\in \xx_{n}^{T}}\xx_{n}\cap B\ep^{\mu}K/K,
$$
where
$$
\xx_{n}\cap B\ep^{\mu}K/K=\begin{bmatrix}1&\cdots&\kp^{a_{i,j}}\\
&\ddots&\vdots\\
&&1
\end{bmatrix}\ep^{\mu}K/K
$$
with $a_{i,j}=-n-\mu_{j}$, is isomorphic to an affine space of dimension 
$$
\sum_{i=1}^{d-1}(d-i)(n+\mu_{i}).
$$

\end{lem}

For the proof, the reader can refer to \cite{chen}, corollaire 2.3. For $\ep^{\mu}\in \xx_{n}^{T}$, let $C(\mu)=\xx_{n}\cap B\ep^{\mu}K/K$. Since $B\subset P$, we have 
$$
X^{\lambda}=\bigsqcup_{\ep^{\mu}\in (X^{\lambda})^{T}}C(\mu).
$$

So the question is to bound the dimension of $C(\mu)$ under the condition that $\mu_{i}\geq -n$ and that
$$
\sum_{i=1}^{r}\mu_{i}=\lambda\leq 0,\quad \sum_{i=r+1}^{d}\mu_{i}=-\lambda\geq 0.
$$

It takes the maximal value when

\begin{eqnarray*}
&&\mu_{1}=\lambda+(r-1)n, \, \mu_{2}=\cdots=\mu_{r}=-n; \\
&&\mu_{r+1}=-\lambda+(d-r-1)n,\,\mu_{r+2}=\cdots=\mu_{d}=-n.
\end{eqnarray*}

So we have
 
\begin{eqnarray*}
\dim(X^{\lambda})&\leq& (d-1)(\lambda+rn)+(d-(r+1))((d-r)n-\lambda)\\
&\leq& (d-1)^{2}n,
\end{eqnarray*}

and then

$$
\codim (\xx_{n}\backslash\xxs_{n})\geq nd(d-1)-(d-1)^{2}n=(d-1)n.
$$
\end{proof}

\begin{rem}

We can also use the dimension formula in theorem 3.2 of \cite{mv} to obtain the same estimation.

\end{rem}

For $n\in \bn$, let $\tau_{n}$ be the truncation operator on $k[\![t]\!]$ defined by 
$$
\tau_{n}\left(\sum_{i=i_{0}}^{+\infty}a_{i}t^{i}\right)=\sum_{i=i_{0}}^{n}a_{i}t^{i}.
$$

\begin{thm}

The Poincaré series of $\xxs/T$ is
\begin{eqnarray*}
P_{\xxs/T}(t)=\frac{1}{(1-t^{2})^{d-1}}\prod_{i=1}^{d-1}(1-t^{2i})^{-1}.
\end{eqnarray*}

Furthermore, we have
$$
H_{2i+1}(\xx^{\xi}/T)=0,
$$
and the Frobenius acts on $H_{2i}(\xx^{\xi}/T)$ by $q^{-i}$, $\forall i\geq 0$.
\end{thm}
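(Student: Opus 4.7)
The plan is to compute $P_{\xxs/T}(t)$ by comparing it with the $T$-equivariant cohomology of $\xx$, exploiting the free action of $T/Z_{G}$ on $\xxs$ and the codimension estimate of Lemma~\ref{dim1}. Since $T\cong \bbg_{m}^{d-1}$ for $\SL_{d}$, one has $P_{BT}(t)=(1-t^{2})^{-(d-1)}$, and the target formula rewrites as $P_{\xx}(t)\cdot P_{BT}(t)=P_{H^{*}_{T}(\xx)}(t)$; so the strategy is to identify $H^{*}(\xxs/T)$ with $H^{*}_{T}(\xx)$ in each fixed degree for $n$ sufficiently large.

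Three steps implement this. First, the Iwahori orbits $I\ep^{\mu}K/K$ intersecting $\xx_{n}$ form a $T$-invariant paving of $\xx_{n}$ by affine cells (as used already in the proof of Lemma~\ref{dim1}), so the action of $T$ on $\xx_{n}$ is equivariantly formal: $H^{*}_{T}(\xx_{n})\cong H^{*}(\xx_{n})\otimes H^{*}(BT)$, hence $P_{H^{*}_{T}(\xx_{n})}(t)=P_{\xx_{n}}(t)\cdot (1-t^{2})^{-(d-1)}$. Second, combining the homological smoothness of $\xx_{n}$ (Corollary~\ref{partiepure}) and of $\xxs_{n}$ (Lemma~\ref{stablepure}) with Poincaré duality and the excision long exact sequence in equivariant cohomology for the open-closed decomposition $\xx_{n}=\xxs_{n}\sqcup Z_{n}$, the dimension bound $\dim Z_{n}\leq nd(d-1)-(d-1)n$ from Lemma~\ref{dim1} implies
$$
\dim H^{i}_{T}(\xxs_{n})=\dim H^{i}_{T}(\xx_{n}),\quad\text{for } i<2(d-1)n-1.
$$
Third, since $T/Z_{G}$ acts freely on $\xxs_{n}$ and $|Z_{G}|=d$ is invertible in $\overline{\bq}_{l}$, the torsor $\xxs_{n}\to \xxs_{n}/T$ yields $H^{i}_{T}(\xxs_{n})\cong H^{i}(\xxs_{n}/T)$. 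Passing to the direct limit over $n$, any fixed degree $i$ eventually falls in the above range, whence
$$
P_{\xxs/T}(t)=P_{\xx}(t)\cdot P_{BT}(t)=\frac{1}{(1-t^{2})^{d-1}}\prod_{i=1}^{d-1}(1-t^{2i})^{-1}.
$$

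For the purity statement, the affine cell paving shows that $H^{*}(\xx_{n})$ is pure Tate with $H^{2i+1}(\xx_{n})=0$ and Frobenius acting by $q^{i}$ on $H^{2i}(\xx_{n})$. These properties propagate through the three steps above, and dualizing cohomology to homology yields the claimed Frobenius action by $q^{-i}$ on $H_{2i}(\xxs/T)$. The main obstacle is the second step: promoting the codimension estimate to an equivariant statement requires a careful use of equivariant Poincaré duality and of local cohomology for the singular closed subscheme $Z_{n}$ inside the merely homologically smooth $\xx_{n}$. Once this is secured, the first step is standard from the $T$-invariant affine paving and the third step is a routine ind-scheme limit argument.
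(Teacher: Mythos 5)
Your overall architecture matches the paper's proof: the same four ingredients appear, namely the codimension bound of Lemma \ref{dim1}, homological smoothness of $\xx_{n}$ and $\xx_{n}^{\xi}$ (Corollary \ref{partiepure} and Lemma \ref{stablepure}) together with Poincaré duality, the free action of the adjoint torus on $\xx_{n}^{\xi}$, and a limit over $n$. The only structural difference is that you run the comparison between $\xx_{n}^{\xi}$ and $\xx_{n}$ at the equivariant level, whereas the paper first proves $H_{i}(\xx_{n}^{\xi})=H_{i}(\xx_{n})$ for $i\leq 2(d-1)n-2$ non-equivariantly (long exact sequence plus Poincaré duality on both homologically smooth spaces, equation (\ref{smalldim})) and only then passes to the quotient via the free action and the tensor decomposition with $H_{*}(B(T/\bbg_{m}))$ (equation (\ref{kunneth})); your Step 1 (equivariant formality of $\xx_{n}$ from the $T$-invariant affine paving) is in fact a clean way to justify that decomposition.

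However, your Step 2 as stated has a genuine gap, and it is exactly the point you defer. The bound $\dim(\xx_{n}\backslash\xx_{n}^{\xi})\leq n(d-1)^{2}$ kills the ordinary (compactly supported) cohomology of the closed stratum $Z_{n}=\xx_{n}\backslash\xx_{n}^{\xi}$ above degree $2n(d-1)^{2}$, but it does not kill its equivariant cohomology: $Z_{n}$ is proper, so its equivariant cohomology with compact supports coincides with $H^{*}_{T}(Z_{n})$, which is an $H^{*}(BT)$-module and is nonzero in arbitrarily high even degrees (already for $Z_{n}$ a point). Hence ``excision long exact sequence in equivariant cohomology plus the dimension bound'' does not by itself give $\dim H^{i}_{T}(\xx_{n}^{\xi})=\dim H^{i}_{T}(\xx_{n})$ in any range. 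To repair it you must either work with equivariant Borel--Moore homology, which does vanish above $2\dim Z_{n}$, and identify the equivariant local cohomology of $\xx_{n}$ along $Z_{n}$ with the equivariant Borel--Moore homology of $Z_{n}$ using homological smoothness of $\xx_{n}$ (this is precisely the ``careful use of equivariant Poincaré duality and local cohomology'' that you flag as the main obstacle and leave unproved), or, more economically, do the comparison non-equivariantly first, as the paper does, obtaining $H_{i}(\xx_{n}^{\xi})=H_{i}(\xx_{n})$ for $i\leq 2(d-1)n-2$, and only afterwards combine with your Steps 1 and 3. With that repair, your limit argument, the resulting Poincaré series, and the oddness/Frobenius statements all go through as in the paper.
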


\begin{proof}

We have the exact sequence
\begin{eqnarray*}
\cdots \to H_{i}(\xx_{n}\backslash \xx_{n}^{\xi})\to H_{i}(\xx_{n})\to H_{i,c}(\xx_{n}^{\xi}) \to H_{i-1}(\xx_{n}\backslash \xx_{n}^{\xi})\to\cdots
\end{eqnarray*}

By lemme \ref{dim1}, 
$$
\dim(\xx_{n}\backslash \xx_{n}^{\xi})\leq n(d-1)^{2},
$$ 

so  

$$
H_{i}(\xx_{n}\backslash \xx_{n}^{\xi})=0,\quad i\geq 2n(d-1)^{2}+1,
$$
and
$$
H_{i,c}(\xx_{n}^{\xi})=H_{i}(\xx_{n}),\quad i\geq 2n(d-1)^{2}+2.
$$

Since $\xx_{n}$ and $\xx_{n}^{\xi}$ are homologically smooth, they satisfy the Poincaré duality, which implies that

\begin{equation}\label{smalldim}
H_{i}(\xx_{n}^{\xi})=H_{i}(\xx_{n}),\quad 0\leq i\leq 2n(d-1)-2,
\end{equation}
since $\dim(\xx_{n})=\dim(\xx_{n}^{\xi})=nd(d-1)$.

Because $T/\bbg_{m}$ acts freely on $\xx_{n}^{\xi}$, we have

\begin{eqnarray}\label{kunneth}
H_{i}(\xx^{\xi}_{n}/T)&=&H_{i,T/\bbg_{m}}(\xx^{\xi}_{n}) \nonumber \\
&=&\bigoplus_{i_{1}+i_{2}=i}H_{i_{1}}(\xx_{n}^{\xi})\otimes H_{i_{2}}(B(T/\bbg_{m})),
\end{eqnarray}
where $B(T/\bbg_{m})$ is the classifying space of $T/\bbg_{m}$-torsors.

Combining the equalities (\ref{smalldim}) and (\ref{kunneth}), we get

\begin{equation}\label{compare}
\tau_{2(d-1)n-2}[P_{\xxs_{n}/T}(t)]=\tau_{2(d-1)n-2}[(1-t^{2})^{1-d}P_{\xx_{n}}(t)],
\end{equation}
and
$$
H_{2i+1}(\xx^{\xi}_{n}/T)=0,\quad 0\leq i\leq (d-1)n-2,
$$
and the Frobenius acts on $H_{2i}(\xx^{\xi}_{n}/T)$ by $q^{-i}$, $0\leq i\leq (d-1)n-1$, because it acts as such on $H_{*}(\xx_{n})$ and $H_{*}(B(T/\bbg_{m}))$.

Since

$$
\lim_{n\to +\infty}\tau_{2(d-1)n}(P_{\xx_{n}}(t))=P_{\xx}(t)=\prod_{i=1}^{d-1}(1-t^{2i})^{-1},
$$ 
we can take limits of the two sides of (\ref{compare}), and get:
$$
P_{\xxs/T}(t)=\frac{1}{(1-t^{2})^{d-1}}\prod_{i=1}^{d-1}(1-t^{2i})^{-1}.$$

This implies that we can also take limits of the two sides of (\ref{smalldim}) and (\ref{kunneth}), and obtain  

$$
H_{i}(\xx^{\xi}/T)=\bigoplus_{i_{1}+i_{2}=i}H_{i_{1}}(\xx)\otimes H_{i_{2}}(B(T/\bbg_{m})),
$$
and the second part of the theorem follows.
\end{proof}

\begin{rem}
The rotation torus $\bbg_{m}$ acts on the quotient $\xxs/T$. It gives an affine paving of $\xxs/T$ for $\SL_{2}$. But for $\SL_{d},\,d\geq 3$, the fixed points $(\xxs/T)^{\bbg_{m}}$ are not discrete.
\end{rem}

\begin{rem}

For other classical groups $G$, we expect also the relation 
$$
H_{*}(\xx^{\xi}/T)=H_{*}(BT)\otimes H_{*}(\xx),
$$
hence
$$
P_{\xx^{\xi}/T}(t)=\frac{1}{(1-t^{2})^{\mathrm{rk}(G)}}P_{\xx}(t),
$$
where $\mathrm{rk}(G)$ is the semisimple rank of $G$. In fact, heuristically, we should think of the affine grassmannian as an ``infinite dimensional smooth projective algebraic variety'', since it is a homogeneous space and it is an injective limit of projective algebraic varieties. So there should be some way to extend our proof  to the other groups.

\end{rem}

%########################################################################
% Bibliography
%########################################################################

\end{document}